\numberwithin{equation}{section}
\newtheorem{theorem}{Theorem}[section]
\newtheorem{definition}{Definition}[section]
\newtheorem{lemma}{Lemma}[section]
\newtheorem{corollary}{Corollary}[section]
\newtheorem{proposition}{Proposition}[section]
\newcommand{\curl}{\mathop{\mathbf{curl}}\nolimits}
\newcommand{\dive}{\mathop{\mathrm{div}}\nolimits}
\newcommand{\dif}{\overline{\partial}}
\newcommand{\norm}[1]{\left\lVert#1\right\rVert}
\newcommand{\set}[1]{\left\{#1\right\}}
\newcommand{\produ}[1]{\left\langle#1\right\rangle}
\newcommand{\abs}[1]{\left\lvert#1\right\rvert}
\newcommand{\X}{X}
\newcommand{\Y}{Y}
\newcommand{\R}{\mathbb{R}}
\newcommand{\oA}{\hat{\Omega}}
\newcommand{\ocA}{\hat{\Omega}_{\mathrm{c}}}
\newcommand{\occA}{\overline{\hat{\Omega}}_{\mathrm{c}}}
\newcommand{\oc}{\Omega_{\mathrm{c}}}
\newcommand{\occ}{\overline{\Omega}_{\mathrm{c}}}
\renewcommand{\H}{\mathrm{H}}
\renewcommand{\L}{\mathrm{L}}
\newcommand{\huno}{\H^1(\Omega)}
\newcommand{\ldoso}{\L^2(\Omega)}
\newcommand{\ldosoc}{\L^2(\oc)}
\newcommand{\cero}{\boldsymbol{0}}
\newcommand{\dt}{\,dt}
\newcommand{\bw}{\mathbf{w}}
\newcommand{\xn}{\mathbf{x}}
\newcommand{\Jn}{\mathbf{J}}
\newcommand{\Hn}{\mathbf{H}}
\newcommand{\pct}{\mathrm{a.e.}}
\newcommand{\V}{X}
\newcommand{\Z}{Z}
\newcommand{\Hm}{\boldsymbol{H}}
\newcommand{\Em}{\boldsymbol{E}}
\newcommand{\ee}{\mathbf{E}}
\newcommand{\hh}{\Hn}
\newcommand{\jj}{\Jn}
\newcommand{\deriparc}[2]{\dfrac{\partial#1}{\partial#2}}
\newcommand{\grad}{\nabla}
\newcommand{\jd}{J_{\mathrm{d}}}
\newcommand{\paren}[1]{\left(#1\right)}
\newcommand{\supp}{\mathop{\mathrm{Supp}}\nolimits}
\newcommand{\houno}{\H_0^1(\Omega)}
\newcommand{\ldoshouno}{\L^2(0,T;\houno)}
\newcommand{\ldosto}{\L^2(0,T;\ldoso)}
\newcommand{\uhn}{u_h^n}
\newcommand{\uhnm}{u_h^{n-1}}
\newcommand{\dual}[2]{\left\langle{#1},{#2}\right\rangle}
\newcommand{\wunodos}{\mathrm{W}^{1,2}(0,T;X,X')}
\newcommand{\ldoshounop}{\L^2(0,T;\houno')}
\newcommand{\hunooc}{\H^1(\oc)}
\newtheorem{problem}{{\bf Problem}}
\newtheorem{remark}{{\bf Remark}}
\def\thanksramiro{Universidad del Cauca, Popay\'an, Colombia, 
 email: {\tt rmacevedo@unicauca.edu.co}.}
\def\thankschristian{Universidad Nacional de Colombia sede Medell\'in, Colombia
                  email: {\tt chcgomezmo@unal.edu.co}., }  
\def\thanksbibiana{Universidad Nacional de Colombia sede Medell\'in, Colombia
                  email: {\tt blopezr@unal.edu.co}., }  
\begin{document}
\title{Fully-discrete finite element approximation for a family of degenerate parabolic problems}



\author{
{\sc Ramiro Acevedo}\thanks{\thanksramiro}\quad
{\sc Christian G\'omez}\thanks{\thankschristian}\quad  
{\sc Bibiana L\'opez-Rodr\'{\i}guez}\thanks{\thanksbibiana}
}

\date{Received: date / Accepted: date}

\maketitle

\begin{abstract}
The aim of this work is to show an abstract framework to analyze the numerical approximation 
by using a finite element method in space and a Backward-Euler scheme in time of a family of degenerate parabolic problems. 
We deduce sufficient conditions to ensure that the fully-discrete problem has a unique solution and to prove quasi-optimal error estimates for the approximation.
Finally, we show a degenerate parabolic problem which arises from electromagnetic applications and deduce its well-posedness and convergence by using the developed abstract theory, including numerical tests to illustrate the performance
of the method and confirm the theoretical results.
\\[2ex]
\textbf{Keywords:}
{parabolic degenerate equations, parabolic-elliptic equations, finite element method, backward Euler scheme, fully-discrete approximation, error estimates, eddy current model.}
\end{abstract}

\section{Introduction}
\label{intro}

A \textit{degenerate parabolic equation} \cite[Chapter III]{showalter} (also called \textit{parabolic-elliptic equation} \cite{pluschke})
is an abstract evolution 
equation of the form
\begin{equation}\label{deg-1}
\dfrac{d}{dt}(Ru(t))+ A(t)u(t)=f(t),
\end{equation}
where $R$ is a linear, bounded and monotone operator 
and $(A(t))_{t\in[0,T]}$ is a family of linear and bounded operators. 
They arise in several applications, for instance 
in the study of eddy currents in electromagnetic field theory
(see \cite{zlamal,maccamy,reales}). 


Results about existence and uniqueness of solutions for some degenerate 
parabolic equations have been widely studied. In \cite{kuttler} Kuttler \& Kenneth L. 
show results concerning existence, uniqueness and regularity of equations 
of the form \eqref{deg-1}, but with R non-invertible and A a linear operator independent 
of the time. 
Sufficient conditions to ensure the 
existence and uniqueness of solutions of \eqref{deg-1}, 
even when R depends on the time, are shown by Showalter \cite{showalter} 
(see also \cite{showalter-2}). Moreover, the existence and uniqueness of the solutions
for the case of the family of operators $A$
can be non-linear, has been analyzed in \cite{kuttler-1982, kuttler-1986, paronetto}. 

Among the numerical methods found in the literature
to compute the approximated solution 
of classical parabolic partial differential equation, the finite element method
(with some time-stepping scheme) is one of the more extended. 
We can cite
the book by V. Thom\'{e}e \cite{thomee}
as a classical reference about this topic. Moreover, books dedicate to
the finite element approximation for partial differential equations, devote at least 
one chapter to the analysis of the numerical approximation of parabolic equations
(see, for instance, \cite{guermond} and \cite{quarteronivalli}). In fact, the 
developed theory for the approximation of parabolic equations by the finite element method, 
is mainly presented 
for a general heat-like equation, i.e., to approximate the solution of a general parabolic 
problem of the 
form: 
\begin{equation}\nonumber
\dfrac{du}{dt} + \mathcal{L}u= f,
\end{equation}
with $\mathcal{L}$ is a coercive differential operator of the second order.

The mathematical analysis for the numerical approximations by finite element methods, 
including existence and uniqueness of the discrete solutions and quasi-optimal error estimates,
has been only performed for particular degenerate parabolic equations. For instance, 
Zlamal \cite{zlamal} has studied the approximation of solution for a two-dimensional eddy current 
problem in a bounded domain, MacCamy \& Zuri \cite{maccamy} 
have proposed a FEM-BEM coupling for the formulation analyzed in \cite{zlamal}, and  
a formulation for an axisymmetric eddy current problem was studied by
Bermudez \textit{et al} \cite{reales}. The formulations studied in all these references can be expressed as
particular cases of problem \eqref{deg-1}. Nevertheless, to the best knowledge of the authors, 
there is not an abstract general theory that allows to deduce the mathematical analysis of
these approximations as particular applications of that theory. 

The main goal of this article is precisely to provide a general theory for the mathematical analysis of a
fully-discrete finite element approximation
for an abstract degenerate parabolic 
equation. To this aim, we consider a fully discrete approximation for a Cauchy problem 
associated to equation \eqref{deg-1}, 
by using a finite element method in space and a Backward-Euler scheme in time. 
We show sufficient conditions 
for the spaces and the family of operators, to guarantee existence and uniqueness of
the fully-discrete solutions by assuming that the time step is sufficiently small. 
Furthermore, we prove quasi-optimal  error estimates for this 
fully discretized scheme by adapting the approximation theory 
for classical parabolic equations to the abstract degenerate case.
Moreover, since a good discrete approximation for the time-derivative of the solution is relevant for the applications,
we prove that this time derivative
can be approximated with quasi-optimal error estimates. 

The outline of the paper is as follows: Section~\ref{sec:1} is devoted to show some concepts about spaces for evolutive problems and the abstract framework for degenerate parabolic equations and their well-posedness are recalled in Section~\ref{degenerate}.  The corresponding analysis for the fully-discrete approximation of problem 
by using finite element method in space and a backward Euler scheme in time, is presented in Section \ref{discreto-d} and the results ensuring the quasi-optimal 
convergence of the approximation method are shown in Section~\ref{error-d}. 
Furthermore,  the application of the theory to an eddy current model is studied in Section \ref{aplicaciones-d}, where we
deduce its well-posedness and theoretical convergence by using the developed abstract theory.
Finally, we show some numerical results that confirm the expected convergence of the method according to the theory.

\section{Hilbert functional spaces for evolutive problems} \label{sec:1}
Let us first review some basic concepts about functional analysis which are useful in dealing with time-dependent functions. A complete and detailed presentation of the concepts that we indicate in this section can be founded, for instance, in  \cite[ Sections 23.2-23.6]{zeidlerl}. More precisely, we need to introduce spaces of functions defined on a bounded time interval $(0,T)$ (where $T>0$ is a fixed time) and with values in separable Hilbert space $X$. We will denote by $\|\cdot\|_{X}$, $(\cdot,\cdot)_{X}$ and $\langle \cdot,\cdot\rangle_{X}$, the norm, the inner product and duality pairing in $\X$. We use the notation $\mathcal{C}^0([0,T];X)$ for the space consisting of all continuous functions $f:[0,T]\to X$. More generally, for any $k\in\mathbb{N}$, $\mathcal{C}^k([0,T];X)$ denotes the subspace of $\mathcal{C}^0([0,T];X)$ of all functions $f$ with (strong) derivatives of order at most $k$ in $\mathcal{C}^0([0,T];X)$, i.e., 
\[
\mathcal{C}^k([0,T];X)
:=\set{f\in\mathcal{C}^0([0,T];X):\quad
\frac{d^jf}{dt^j}
\in\mathcal{C}^0([0,T];X),\quad 1\leq
j\leq k}.
\]
A classical result of functional analysis states  $\mathcal{C}^k([0,T];X)$ is a Banach space with the norm
\[
\norm{f}_{\mathcal{C}^k([0,T];X)}:=\sup_{t\in[0,T]} \sum_{j=0}^k\norm{\frac{d^j f}{dt^j}(t)}_{X}.
\]

We also consider the space  $\L^2(0,T; X)$ of classes of functions $f:(0,T)\to X$ that are B\"ochner-measurable whose norm in $X$ belongs to   $\L^2(0,T)$, i.e., 
\[
\norm{f}^2_{\L^2(0,T; X)}:= \int_0^T\norm{f(t)}_{X}^2 \dt <
+\infty.
\]
The space $\L^2(0,T;X)$ is a Hilbert space with the norm $\|\cdot\|_{\L^2(0,T;X)}$. Furthermore, the dual space of $\L^2(0,T;X)$ can be identified with the space $\L^2(0,T;X')$ as shown in the following result. 

\begin{proposition}[Dual space of $\L^2(0,T;X)$]
Let $X$ be a separable Hilbert space. 
For any $f\in \L^2(0,T;X)'$ there exists a unique $v_f\in\L^2(0,T; X')$ satisfying
\[
\dual{f}{w}=\int_0^T\dual{v_f(t)}{w(t)}_{X}dt\qquad\forall w\in \L^2(0,T;X).
\]
Moreover, the map $f\mapsto v_f$ is a linear bijection which preserves the norm, i.e., 
\[
\norm{f}_{\paren{L^2(0,T;X)}'}=\norm{v_f}_{L^2(0,T;X')}\qquad\forall f\in\paren{L^2(0,T;X)}'.
\]
\end{proposition}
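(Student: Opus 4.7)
The plan is to reduce the statement to the standard Riesz representation theorem on a Hilbert space by exploiting the fact that $L^2(0,T;X)$ is itself a Hilbert space whenever $X$ is. First I would observe that, since $X$ is a separable Hilbert space, $L^2(0,T;X)$ carries a natural inner product
\[
(u,w)_{L^2(0,T;X)} := \int_0^T (u(t),w(t))_{X}\,dt,
\]
and one easily checks it is complete for the induced norm $\|\cdot\|_{L^2(0,T;X)}$, so it is a (separable) Hilbert space.

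Applying the classical Riesz representation theorem in this Hilbert space, any $f\in L^2(0,T;X)'$ corresponds to a unique $u_f\in L^2(0,T;X)$ such that
\[
\langle f, w\rangle = \int_0^T (u_f(t),w(t))_{X}\,dt\qquad\forall w\in L^2(0,T;X),
\]
together with the norm identity $\|f\|_{(L^2(0,T;X))'}=\|u_f\|_{L^2(0,T;X)}$. Let $R_X:X\to X'$ denote the Riesz isomorphism, characterized by $\langle R_X\phi,\psi\rangle_{X}=(\phi,\psi)_{X}$; it is a linear isometry. I would then define $v_f(t):=R_X(u_f(t))$ for a.e. $t\in(0,T)$ and verify: (i) $v_f$ is B\"ochner-measurable, which follows because $R_X$ is continuous and $u_f$ is strongly measurable (with $X'$ separable since $X$ is); (ii) $v_f\in L^2(0,T;X')$ with $\|v_f(t)\|_{X'}=\|u_f(t)\|_{X}$, so $\|v_f\|_{L^2(0,T;X')}=\|u_f\|_{L^2(0,T;X)}$; and (iii) the required duality identity holds, since $\langle v_f(t),w(t)\rangle_X = (u_f(t),w(t))_X$ pointwise a.e.

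For uniqueness, if two candidates $v_f^1$ and $v_f^2$ both represented $f$, I would test the identity against $w(t):=R_X^{-1}(v_f^1(t)-v_f^2(t))$, which lies in $L^2(0,T;X)$ by the isometry property, and conclude $\int_0^T\|v_f^1(t)-v_f^2(t)\|_{X'}^2\,dt=0$. Linearity of $f\mapsto v_f$ is inherited from the linearity of both the Riesz map on $L^2(0,T;X)$ and $R_X$, and the isometric character of the correspondence then reads $\|f\|_{(L^2(0,T;X))'}=\|u_f\|_{L^2(0,T;X)}=\|v_f\|_{L^2(0,T;X')}$. The only subtle point I anticipate is the measurability of $v_f$: one must invoke separability of $X$ (and thus of $X'$) together with Pettis' theorem to ensure that strong and weak measurability coincide, so that the composition $R_X\circ u_f$ qualifies as an element of $L^2(0,T;X')$; beyond this, the argument is routine.
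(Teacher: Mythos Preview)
Your argument is correct and follows the natural route: identify $L^2(0,T;X)$ as a Hilbert space, apply the Riesz theorem there to obtain $u_f$, and then push through the pointwise Riesz isomorphism $R_X:X\to X'$ to land in $L^2(0,T;X')$. The measurability issue you flag is indeed the only nontrivial point, and your handling of it via continuity of $R_X$ and separability is fine.

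There is nothing to compare against, however: the paper does not prove this proposition but simply refers the reader to \cite[Proposition~23.7]{zeidlerl}. Your write-up is thus more detailed than what the paper provides. One small addition worth making explicit: the statement claims that $f\mapsto v_f$ is a \emph{bijection}, and while you establish injectivity (uniqueness) and the isometry, surjectivity is not spelled out. It is immediate from your construction---given any $v\in L^2(0,T;X')$, set $u:=R_X^{-1}\circ v\in L^2(0,T;X)$ and let $f$ be the functional $w\mapsto (u,w)_{L^2(0,T;X)}$---but a one-line remark to this effect would close the loop.
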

\begin{proof}
See, for instance, \cite[Proposition~23.7]{zeidlerl}.
\end{proof}

The analysis of evolutive differential problems require functional spaces involving time-derivatives. Let $\X$ and $\Y$ be two separable Hilbert spaces such that $X\subset Y$ with continuous and dense embedding. Let $X'$ the dual space of $X$ with respect to the pivot space $Y$. More precisely, $Y$ can be identified as a subset of $X'$ and 
\[
\langle w, v\rangle_{X}=(w,v)_{X}\quad \forall w\in Y \quad \forall v\in X. 
\]

We will denote by $\wunodos$ the functional space given by
\[
\wunodos:=
\left\{
v\in \L^2(0,T;X):\: \dfrac{dv}{dt}\in \L^2(0,T;X')
\right\}
\]
where $\dfrac{dv}{dt}$ is the \textit{generalized time-derivative} of $v$ characterized by 
\[
\int_{0}^T\dual{\dfrac{dv}{dt}(t)}{w}_{X}\varphi(t)dt= - \int_{0}^T\paren{v(t),w}_X\varphi'(t)dt \qquad \forall w\in X \quad\forall\varphi\in C_0^{\infty}(0,T).
\]
It is well known that $\wunodos$ endowed with the norm
\[
\norm{v}_{\wunodos}:=\norm{v}_{L^2(0,T;X)} + \norm{\dfrac{dv}{dt}}_{L^2(0,T;X')}
\]
is a Banach space and $\wunodos\subset \mathcal{C}^0([0,T];Y)$ with a continuous embedding (see, for instance, \cite[Proposition~23.23]{zeidlerl}). 

Let $k\in\mathbb{Z}^+$. The generalized time-derivative of order $k$ of $v\in \L^2(0,T;X)$, denoted by $\dfrac{d^kv}{dt^k}$, can be defined inductively. 
Hence, we can consider the space
\[
\mathrm{H}^k(0,T;X):=\left\{
v\in \L^2(0,T;X):\: \dfrac{d^jv}{dt^j}\in \L^2(0,T;X),\: j=1,\ldots,k
\right\},
\]
which is a Banach space with the norm
\[
\norm{v}_{\mathrm{H}^k(0,T;X)}:=\sum_{j=0}^k\norm{\dfrac{d^jv}{dt^j}}_{L^2(0,T;X)}.
\]
Furthermore, the embedding 
$
\mathrm{H}^k(0,T;X) \subset \mathcal{C}^{k-1}([0,T];X)
$
is continuous for any $k\in\mathbb{Z}^+$. 

\section{The degenerate parabolic problem}\label{degenerate}

Let $\X$ and $\Y$ be two real separable Hilbert spaces such that $\X\subset\Y$ with continuous and dense embedding. We denote by $(\cdot,\cdot)_{\V}$ and $(\cdot,\cdot)_{\Y}$ the inner products on $\X$ and $\Y$ respectively and $\|\cdot\|_{\V}$, $\|\cdot\|_{\Y}$ the corresponding norms. Furthermore, $\langle\cdot,\cdot\rangle_{\V}$ and $\langle\cdot,\cdot\rangle_{\Y}$ denote respectively the duality paring of $\V$ and $\Y$ and their corresponding dual spaces. Let $R:\Y \to \Y'$ a linear and bounded operator.
Let $T>0$, for any $t\in[0,T]$, let us consider a linear and bounded operator $A(t):\V\to\V'$. 
Then, given $f\in \L^2(0,T;\V')$ and $u_{0}\in \Y$, the degenerate parabolic problem can read as follows.

\begin{problem}
\label{ppdc}
Find $u\in\L^2(0,T;\V)$ such that:
\begin{align*}
\frac{d}{dt}\left\langle Ru(t),v\right\rangle_{\Y} + \produ{A(t)u(t),v}_{\X}=&\produ{f(t),v}_{\X}\quad \forall v\in \V ,\\
\left<Ru(0),v\right>_{\Y}=&\left<Ru_0,v\right>_{\Y}\quad\forall v \in \Y.
\end{align*}
\end{problem}

The first identity in Problem~\ref{ppdc} is given in the space of the distributions $\mathcal{D}'(0,T)$, i.e., this equation is equivalent to
\begin{equation}\nonumber
- \int_0^T \left\langle Ru(t),v\right\rangle_{\Y} \varphi'(t)dt  + \int_0^T  \produ{A(t)u(t),v}_{\X} \varphi(t)dt =  \int_0^T  \produ{f(t),v}_{\X} \varphi(t)dt
\end{equation}
for all $v\in \V$ and $\varphi\in C_0^\infty(0,T)$. Moreover, Problem~\ref{ppdc} can be formulated as any of the following two equivalent problems.
\begin{problem}
\label{ppdint}
Find $u\in \L^2(0,T;\V)$ such that 
\[
-\int_0^T{\left\langle Ru(t), v'(t)\right\rangle}_{\Y}dt+\int_0^T{\left\langle A(t)u(t),v(t)\right\rangle}_{\V}dt
=\int_0^T{\left\langle f(t),v(t)\right\rangle}_{\V}dt+\left\langle Ru_0,v(0)\right\rangle_{\Y},
\]
for all  $v\in \L^2(0,T;\V)\cap H^1(0,T;\Y)$ with $v(T)=0$.
\end{problem}

\begin{problem}
\label{ppddual}
Find $u\in \L^2(0,T;\V)$ satisfying
\begin{align*}
\frac {d}{dt}Ru(\cdot) +A(\cdot)u(\cdot)&=f(\cdot)\quad\text{in} \ \  \L^2(0,T;\V'),\\
Ru(0)&=Ru_0\quad \text{in}\quad \Y'.
\end{align*}
\end{problem}

Let us remark that the first equation in Problem~\ref{ppddual} implies that  $Ru(\cdot)\in H^1(0,T;\V')$, consequently the function $t\mapsto Ru(t)$ is absolutely continuous in $\V'$ and, in particular, $Ru(0)\in\V'$. On the other hand, since the inclusion $X\subset Y$ is dense and continuous, the inclusion $Y'\subset X'$ is also dense and continuous and therefore, by recalling that $Ru_0\in \Y'$, the initial condition given by the second equation of Problem~\ref{ppddual} has meaning, which is equivalent to the second equation of Problem~\ref{ppdc}.

In order to obtain the well-posedness result for Problem~\ref{ppdc} (and equivalently for Problem~\ref{ppdint} and Problem~\ref{ppddual}), we need to recall the following definition; see \cite[Section~III.3]{showalter}.
\begin{definition}
Let $\Z$ be a real separable Hilbert space and $\mathcal{G}:=\{G(t):\Z\to \Z':t\in[0,T]\}$ be a family of linear and bounded operators.  $\mathcal{G}$ is called \textit{monotone}, if $\left<G(t)v,v\right>_{\Z}\geq 0$ for any $v\in \Z$ and for any $t\in[0,T]$. $\mathcal{G}$ is called \textit{self-adjoint}, if $\left<G(t)u,v\right>_{\Z}=\left<G(t)v,u\right>_{\Z}$ for any $u,v\in \Z$ and for any $t\in[0,T]$. Similarly, $\mathcal{G}$ is called \textit{regular} if for each $u,v\in \Z$ the map $t\mapsto \left<G(t)u,v\right>_{\Z}$ is absolutely continuous on $[0,T]$ and there exists a function $k:(0,T)\to\mathbb{R}$ belongs to $L^1(0,T)$, which satisfies
\[
\abs{\frac{d}{dt}\left<G(t)u,v\right>_{\Z}}\leq k(t)\|u\|_\Z \|v\|_\Z\qquad \forall u,v\in \Z\quad\textrm{a.e. } t\in [0,T].
\]
\end{definition} 

The following result shows sufficient conditions to obtain the existence and uniqueness of solution for Problem~\ref{ppdc} and its proof can be founded in \cite[Proposition~III.3.2 and III.3.3]{showalter}. 
\begin{theorem}\label{welld}
Assume that the operator $R$ is monotone, self-adjoint, and there exist constants $\lambda>0$ and $\alpha>0$ such that
\begin{equation}\label{existencia}
\lambda\left<Rv,v\right>_{\Y}  + \left<A(t)v,v\right>_{\V}
\geq \alpha \|v\|_\V^2\quad \forall v\in \V\quad\forall t\in [0,T]. 
\end{equation}
Then, there exists a solution of Problem~\ref{ppdc} and it satisfies
\begin{equation}\label{dependencia-d}
\|u\|_{\L^2(0,T;\V)}\leq C \left(\|f\|_{\L^2(0,T;\V')}^2+ \left<Ru_0,u_0 \right>_{\Y}\right)^{\frac{1}{2}},
\end{equation}
for some constant $C>0$. Furthermore, if $A(t)$ is a regular family of self-adjoint operators, then the solution of Problem~\ref{ppdc} is unique.
\end{theorem}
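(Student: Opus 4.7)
The plan follows the classical Faedo--Galerkin scheme adapted to the degenerate setting by first reducing to a strictly coercive problem. I would introduce the substitution $u(t) = e^{\lambda t} w(t)$, which transforms the evolution equation into
\[
\frac{d}{dt}(R w(t)) + \paren{\lambda R + A(t)} w(t) = e^{-\lambda t} f(t),
\]
and observe that the shifted family $B(t) := \lambda R + A(t)$ is $\V$-coercive with constant $\alpha$ thanks to \eqref{existencia}. Existence for Problem~\ref{ppdc} is thereby reduced to existence for a coercive degenerate parabolic problem with the same initial data.

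For the reduced problem I would construct a Galerkin approximation $w_n$ in a sequence of finite-dimensional subspaces $\V_n\subset\V$ whose union is dense in $\V$. Because $R$ is only monotone (hence possibly non-invertible), the resulting system is a linear differential--algebraic equation, but coercivity of $B(t)$ guarantees its solvability on $[0,T]$. Testing with $w_n$ and using self-adjointness of $R$ to write $\frac{1}{2}\frac{d}{dt}\produ{Rw_n(t),w_n(t)}_{\Y}=\produ{(Rw_n)'(t),w_n(t)}_{\V}$ would yield
\[
\frac{1}{2}\frac{d}{dt}\produ{R w_n(t), w_n(t)}_{\Y} + \alpha \norm{w_n(t)}_{\V}^2 \leq \norm{e^{-\lambda t} f(t)}_{\V'}\norm{w_n(t)}_{\V}.
\]
Integrating in time, applying Young's inequality, and exploiting monotonicity of $R$ to bound the boundary term at $t=0$ by $\produ{Ru_0,u_0}_{\Y}$, I would obtain the a priori bound \eqref{dependencia-d}. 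Weak compactness in $\L^2(0,T;\V)$ then provides a limit $w$ satisfying the transformed problem, and $u(t)=e^{\lambda t}w(t)$ recovers a solution of Problem~\ref{ppdc}.

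For uniqueness under the additional regularity and self-adjointness of $A(t)$, I would take the difference $e:=u_1-u_2$ of two solutions, so that $\frac{d}{dt}(Re)+A(t)e=0$ with $Re(0)=0$. The key step is to establish the energy identity
\[
\produ{Re(t),e(t)}_{\Y} + 2\int_0^t \produ{A(s)e(s),e(s)}_{\V}\ds = \int_0^t \dual{\tfrac{dA}{ds}(s)\,e(s)}{e(s)}_{\V}\ds,
\]
which uses both the self-adjointness of $R$ and of $A(s)$. Combining \eqref{existencia} with the regularity estimate $\abs{\frac{d}{ds}\produ{A(s)u,v}_{\V}}\leq k(s)\norm{u}_{\V}\norm{v}_{\V}$ gives a Grönwall inequality for $\produ{Re(t),e(t)}_{\Y}+\int_0^t\norm{e(s)}_{\V}^2\ds$; since this quantity vanishes at $t=0$, I conclude $e\equiv 0$ in $\L^2(0,T;\V)$.

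The main obstacle is the rigorous justification of the chain-rule identity for $t\mapsto\produ{Re(t),e(t)}_{\Y}$ (and its $A$-weighted analogue) when $e\in \L^2(0,T;\V)$ but only $(Re)'\in \L^2(0,T;\V')$, a regularity mismatch characteristic of degenerate evolution equations where the standard Lions--Magenes integration-by-parts formula does not apply directly. The natural device is to regularize by replacing $R$ with $R+\varepsilon I$, for which the approximate solutions are smooth enough that the symmetric integration-by-parts identities hold in classical form, and then pass to the limit $\varepsilon\to 0^+$ using the uniform energy estimates; the self-adjointness hypotheses on $R$ and $A(t)$, together with the regularity of the family $\mathcal{A}=\{A(t)\}$, are precisely what ensure that the symmetric identities survive this limit.
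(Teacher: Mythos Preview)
The paper does not supply its own proof of this theorem; it simply cites Propositions~III.3.2 and~III.3.3 of Showalter's monograph. Your existence argument---the exponential shift $u=e^{\lambda t}w$ to reduce to the coercive family $B(t)=\lambda R+A(t)$, followed by a Faedo--Galerkin approximation and the energy estimate---is correct and is precisely the strategy behind Showalter's Proposition~III.3.2; the bound you obtain is \eqref{dependencia-d}.

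The uniqueness sketch, however, has two related inaccuracies. First, the displayed energy identity is not what testing the homogeneous equation with $e$ produces: if the chain rule for $t\mapsto\langle Re(t),e(t)\rangle_{\Y}$ were available, one would get $\langle Re(t),e(t)\rangle_{\Y}+2\int_0^t\langle A(s)e(s),e(s)\rangle_{\V}\,ds=0$ with no $A'$ term at all, and then \eqref{existencia} plus Gr\"onwall would give $e=0$ \emph{without} using regularity or self-adjointness of $A(t)$. The reason those hypotheses appear is not to bound an $A'$ term in a Gr\"onwall step, but to \emph{replace} the unavailable chain rule: Showalter's Proposition~III.3.3 tests the integral formulation (Problem~\ref{ppdint}) with $v(t)=\int_t^s e(\tau)\,d\tau$, and then integrates $\int_0^s\langle A(t)e(t),v(t)\rangle_{\V}\,dt$ by parts in $t$; this is where self-adjointness of $A(t)$ (to symmetrize $\langle A(t)W'(t),W(t)\rangle_{\V}$ with $W(t)=\int_0^t e$) and absolute continuity of $t\mapsto\langle A(t)u,v\rangle_{\V}$ with the $\L^1$ bound on its derivative are genuinely used. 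Second, your proposed device---replace $R$ by $R+\varepsilon I$---regularizes the \emph{equation}, not the given solution $e$: it yields a different function $e_\varepsilon$ solving a different problem, and does not establish the chain rule along the original $e$. If that route worked as you describe, uniqueness would follow under \eqref{existencia} alone, which is false in the degenerate setting. You correctly identified the obstacle; the resolution is the time-integrated test function, not perturbation of $R$.
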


\section{Fully-discrete approximation for degenerate parabolic problem}\label{discreto-d}
In this section we present the fully-discrete approximation for the degenerate parabolic problem which was introduced in the previous section. To this aim, we assume that the family of operators $A(t)$ and the operator $R$ satisfy the sufficient conditions given in Theorem~\ref{welld} to guarantee the existence and uniqueness of solution of Problem~\ref{ppdc}. 

The fully-discrete approximation will be obtained by using the finite-element method in space and a backward-Euler scheme in time. Let $\{\X_h\}_{h>0}$ be a sequence of finite-dimensional subspaces of $\X$ and let
$t_n: =n\Delta t$, $n=0,\ldots,N,$ be a uniform partition of $[0,T]$ with a time-step $\Delta t := T/N$. 

For any finite sequence $\{\theta^n: \ n=0,\ldots,N\}$ 
we denote
\[
\dif\theta^n:=\frac{\theta^n-\theta^{n-1}}{\Delta t},\qquad n=1,\dots,N.
\]

Let $u_{0,h}\in\X_h$ a given approximation of $u_0$. The fully-discrete approximation of Problem~\ref{ppdc} reads as follows.
\begin{problem}\label{ppdd} 
Find $u_h^n\in\V_h$, $n=1,\ldots,N$, such that
\begin{align*}
\langle R\dif u_h^n ,v\rangle_{\Y} +\langle A (t_n)u_h^n,v\rangle_{\V}
&=\langle f(t_n),v\rangle_{\V}\qquad \forall v\in\V_h.\\
u_h^0&=u_{0,h}
\end{align*}
\end{problem}

We can easily check that in each step $n=1,\ldots,N$, $u_h^n$ is computed as the solution of the following problem: find $u_h^n\in\V_h$ such that
\begin{align*}
\mathcal{A}_{n}(u_h^n,v)=F_n(v)\qquad \forall v\in \V_h,
\end{align*}
where $\mathcal{A}_{n}$ and $F_n$ are defined by
\begin{align*}
\mathcal{A}_{n}(w,v)&:=\langle R w,v\rangle_{\Y}+\Delta t \ \langle A(t_n)w,v \rangle_{\V}\qquad\forall w,\ v\in\V_h,\\
F_n(v)&:=\Delta t\ \langle f(t_n),v\rangle_{\V}+\langle R u_h^{n-1},v\rangle_{\Y}\qquad\forall v\in\V_h. 
\end{align*}
We will use the Lax-Milgram Lemma to deduce the existence and uniqueness of solution of Problem~\ref{ppdd} for each $n=1,\ldots,N$. Since $F_n$ is linear and bounded and $\mathcal{A}_{n}$ is bilinear and bounded, we need to prove that $\mathcal{A}_{n}$ is elliptic in $\V_h$. In fact, if we assume that $0<\Delta t\leq 1/\lambda$, for any $v\in\V_h$ we have
\[
\mathcal{A}_{n}(v,v)=\langle R v,v\rangle_{\Y}+\Delta t  \langle A(t_n)v,v \rangle_{\V}
\geq \Delta t\left[\lambda\langle R v,v\rangle_{\Y}+\produ{A(t_n)v,v}_{\V}\right],
\]
then, from \eqref{existencia} it follows that 
\[
\mathcal{A}_{n}(v,v)\geq \alpha\Delta t \|v\|^2_{\V}\qquad \forall v\in \V_h.
\]
Consequently, we have the following result about the existence and uniqueness of solution for the fully-discrete Problem~\ref{ppdd}. 
\begin{theorem}\label{welldh}
Assume that the family of operators $A(t)$ and the operator $R$ satisfy the sufficient conditions given in Theorem~\ref{welld} to guarantee the existence and uniqueness of solution of Problem~\ref{ppdc}. If the time-step $\Delta t$ is small enough (\textit{e.g.}, $0<\Delta t\leq 1/\lambda$), the fully-discrete Problem~\ref{ppdd} has a unique solution $u_h^n\in\V_h$ for each $n=1,\ldots,N$.
\end{theorem}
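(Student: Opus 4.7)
The plan is to argue by induction on $n$, and at each time step reduce the existence-and-uniqueness question to a single application of the Lax--Milgram lemma on the finite-dimensional Hilbert space $(\V_h,\|\cdot\|_{\V})$. For $n=1$ the previous iterate is $u_h^0=u_{0,h}\in\V_h$; given $u_h^{n-1}\in\V_h$, I would recast Problem~\ref{ppdd} at time $t_n$ exactly as in the paragraph preceding the statement: find $u_h^n\in\V_h$ with $\mathcal{A}_n(u_h^n,v)=F_n(v)$ for all $v\in\V_h$.

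Next I would verify the hypotheses of Lax--Milgram on $\V_h$. Bilinearity of $\mathcal{A}_n$ and linearity of $F_n$ are immediate; continuity of $\mathcal{A}_n$ follows from the boundedness of $R:\Y\to\Y'$ together with the continuous embedding $\V\subset\Y$, and from the boundedness of $A(t_n):\V\to\V'$; continuity of $F_n$ uses in addition that $Ru_h^{n-1}\in\Y'\subset\V'$ and $f(t_n)\in\V'$ (the latter a pointwise evaluation taken as part of the definition of the scheme since $f\in\L^2(0,T;\V')$).

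The essential step is the $\V_h$-ellipticity of $\mathcal{A}_n$, and this is precisely where the smallness condition on $\Delta t$ enters. For any $v\in\V_h$, using the monotonicity $\produ{Rv,v}_{\Y}\geq 0$ together with $\Delta t\,\lambda\leq 1$, one bounds
\[
\mathcal{A}_n(v,v)=\produ{Rv,v}_{\Y}+\Delta t\,\produ{A(t_n)v,v}_{\V}\geq \Delta t\bigl[\lambda\produ{Rv,v}_{\Y}+\produ{A(t_n)v,v}_{\V}\bigr],
\]
and then applies the bound \eqref{existencia} from Theorem~\ref{welld} to conclude $\mathcal{A}_n(v,v)\geq \alpha\Delta t\,\|v\|_{\V}^2$. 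Lax--Milgram then furnishes a unique $u_h^n\in\V_h$ at each step, and induction completes the argument.

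I do not expect a substantial obstacle: the argument is essentially a routine verification that the text preceding the statement has already laid out. The only conceptual point worth flagging is the role of $\Delta t\leq 1/\lambda$, which is exactly what allows one to absorb the possibly indefinite $A(t_n)$-contribution into the definite lower bound provided by \eqref{existencia}, thereby producing genuine $\V$-coercivity out of a combination of a merely monotone $R$ and a G\aa rding-type inequality for $A(t_n)$.
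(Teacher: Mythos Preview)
Your proposal is correct and follows essentially the same route as the paper: recast each time step as a problem $\mathcal{A}_n(u_h^n,v)=F_n(v)$ on $\V_h$, check boundedness, use $\Delta t\leq 1/\lambda$ together with monotonicity of $R$ and the G\aa rding-type inequality \eqref{existencia} to obtain $\mathcal{A}_n(v,v)\geq \alpha\Delta t\,\|v\|_{\V}^2$, and conclude via Lax--Milgram. The only addition you make is the explicit inductive framing, which is harmless.
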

\section{Error estimates for the fully-discrete approximation}\label{error-d}
In this section, we will deduce some error estimates for the fully-discrete approximation. To this aim, from now on we assume the assumptions of Theorems~\ref{welld} and \ref{welldh}. Moreover, we assume that the solution to Problem~\ref{ppdc} satisfies $u\in\H^1(0,T;\X)$. Furthermore, we consider the orthogonal projection operator $\Pi_h:\V \to\V_h$, defined by
\[
\Pi_h w\in\V_h:\quad (\Pi_h w,v)_{\V}=(w,v)_{\V}\quad \forall v\in \V_h,
\]
clearly, $\Pi_h$ is well-defined and satisfies
\begin{align}\label{infprodg}
\|w-\Pi_h w\|_{\V}\leq \inf_{v\in \V_h} \|w-v\|_{\V}\quad \forall w\in \V.
\end{align}

{}From now on $u$ and $u_h^n$, $n=1,\ldots,N$, denotes the solutions to Problem~\ref{ppdc} and Problem~\ref{ppdd}, respectively.
We define the error and consider its splitting
\begin{align}
\label{splitE-d}
e^n_h:=u(t_n)-u_h^n=\rho_h^n+\sigma_h^n, \qquad n=1,\ldots,N,
\end{align}
where
\begin{align}\label{rhoysigmad}
\rho_h(t):=u(t)-\Pi_h u(t),\quad \rho_h^n:=\rho_h(t_n),\quad \sigma_h^n:=\Pi_h u(t_n)-u_h^n.
\end{align}
Furthermore, we denote
\begin{equation}\nonumber
\tau^n:=\frac{u(t_n)-u(t_{n-1})}{\Delta t}-\partial_t u(t_n).
\end{equation}

\begin{lemma}\label{lemdg}
If $u\in\H^1(0,T;\X)$ then there exists a constant $C>0$, independent of $h$ and $\Delta t$, such that
\begin{equation}\label{desi-sigma}
\langle R\sigma_h^n,\sigma_h^n\rangle_{\Y}
+\Delta t\,\sum_{k=1}^n\|\sigma_h^k\|^2_{\X}
\leq C\left[\langle R\sigma_h^0,\sigma_h^0\rangle_{\Y}
+\Delta t\,\sum_{k=1}^N \left\{\|\tau^k\|^2_{\Y}+\|\dif\rho_h^k\|^2_{\X}+\|\rho_h^k\|^2_{\X}
\right\}\right].
\end{equation}
Furthermore, if $u_0\in\X$ and for each $t\in[0,T]$ the operator $A(t)$ is monotone and there exists a constant $C>0$ such that 
\begin{equation}\label{Ap-bounded}
\langle A'(t)u,v\rangle\leq C\|u\|_{\X}\|v\|_{\X}\quad \forall u,v\in \X\quad \forall t\in [0,T],
\end{equation}
then, there exists a constant $C>0$, independent of $h$ and $\Delta t$, such that 
\begin{equation}\label{estimasumaderivada-R}
\begin{split}
 \Delta t\sum_{k=1}^n\langle R\dif \sigma_h^k,\dif \sigma_h^k\rangle_{\Y}
&+\langle A(t_n)\sigma_h^n,\sigma_h^n\rangle_{X}\\ 
&\leq  
C\left[ \Vert \sigma_h^{0}\Vert_X^2 + \Vert \rho_h^{0}\Vert_X^2 + \Vert \rho_h^{n}\Vert_X^2
+\Delta t\sum_{k=1}^{N} \left\{\Vert{\tau^k}\Vert_{\Y}^2 + \Vert\dif\rho_h^k\Vert_{\X}^2+ \Vert\rho_h^k\Vert_{\X}^2\right\}
\right].
\end{split}
\end{equation}
\end{lemma}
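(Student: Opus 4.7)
My plan is to derive a unified error equation for $\sigma_h^n$ by subtracting the discrete equation of Problem~\ref{ppdd} from the continuous equation of Problem~\ref{ppdc} evaluated at $t=t_n$, after the substitution $\partial_t u(t_n)=\dif u(t_n)-\tau^n$. Splitting $e_h^n=\rho_h^n+\sigma_h^n$ yields, for every $v\in\V_h$,
\[
\langle R\dif\sigma_h^n,v\rangle_{\Y} + \langle A(t_n)\sigma_h^n,v\rangle_{\X} = \langle R\tau^n,v\rangle_{\Y} - \langle R\dif\rho_h^n,v\rangle_{\Y} - \langle A(t_n)\rho_h^n,v\rangle_{\X}.
\]
Both estimates will follow by testing this identity with a suitable $v\in\V_h$ and exploiting the structural hypotheses on $R$ and $A(\cdot)$.

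For \eqref{desi-sigma} I would test with $v=\sigma_h^n$. Self-adjointness and monotonicity of $R$ supply the discrete energy lower bound $2\Delta t\langle R\dif\sigma_h^n,\sigma_h^n\rangle_{\Y} \geq \langle R\sigma_h^n,\sigma_h^n\rangle_{\Y} - \langle R\sigma_h^{n-1},\sigma_h^{n-1}\rangle_{\Y}$, while the coercivity hypothesis \eqref{existencia} yields $\langle A(t_n)\sigma_h^n,\sigma_h^n\rangle_{\X} \geq \alpha\|\sigma_h^n\|_{\X}^2 - \lambda\langle R\sigma_h^n,\sigma_h^n\rangle_{\Y}$. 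The three right-hand terms are controlled by Cauchy--Schwarz, boundedness of $R$ and $A(t_n)$, the continuous embedding $\X\hookrightarrow\Y$, and Young's inequality. Summing over $k=1,\dots,n$ and invoking a discrete Gronwall inequality (valid for $\Delta t$ small enough) closes the argument.

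For \eqref{estimasumaderivada-R} I would test the same error equation with $v=\dif\sigma_h^n$. Monotonicity of $R$ keeps $\langle R\dif\sigma_h^k,\dif\sigma_h^k\rangle_{\Y}$ directly on the left-hand side. The crucial algebraic step is the $A$--analogue of the previous identity: self-adjointness and monotonicity of $A(t_n)$ yield $2\Delta t\langle A(t_n)\sigma_h^n,\dif\sigma_h^n\rangle_{\X} \geq \langle A(t_n)\sigma_h^n,\sigma_h^n\rangle_{\X} - \langle A(t_n)\sigma_h^{n-1},\sigma_h^{n-1}\rangle_{\X}$. After summing for $k=1,\dots,n$, an Abel rearrangement combined with hypothesis \eqref{Ap-bounded} replaces each $\langle A(t_k)\sigma_h^{k-1},\sigma_h^{k-1}\rangle_{\X}$ by $\langle A(t_{k-1})\sigma_h^{k-1},\sigma_h^{k-1}\rangle_{\X}$ up to a residual of order $\Delta t\|\sigma_h^{k-1}\|_{\X}^2$, producing the target $\langle A(t_n)\sigma_h^n,\sigma_h^n\rangle_{\X}$ on the left plus the initial contribution $\langle A(t_1)\sigma_h^0,\sigma_h^0\rangle_{\X}\leq C\|\sigma_h^0\|_{\X}^2$. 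The right-hand $R$--pieces are bounded by Cauchy--Schwarz in the positive semidefinite form $\langle R\cdot,\cdot\rangle_{\Y}$, with the $\varepsilon\langle R\dif\sigma_h^k,\dif\sigma_h^k\rangle_{\Y}$ contributions absorbed into the left; for $\Delta t\sum_k\langle A(t_k)\rho_h^k,\dif\sigma_h^k\rangle_{\X}$ I would perform summation by parts, expose the boundary values, and decompose each shifted difference as $A(t_{k+1})\rho_h^{k+1}-A(t_k)\rho_h^k = \Delta t A(t_{k+1})\dif\rho_h^{k+1} + \int_{t_k}^{t_{k+1}}A'(t)\rho_h^k\,dt$ to reveal the $\|\dif\rho_h^{k+1}\|_{\X}$ and $\|\rho_h^k\|_{\X}$ data; the initial boundary term at $k=1$ produces $\|\rho_h^0\|_{\X}^2$ via $\rho_h^1=\rho_h^0+\Delta t\dif\rho_h^1$.

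The principal obstacle is the terminal boundary term $\langle A(t_n)\rho_h^n,\sigma_h^n\rangle_{\X}$ produced by this summation by parts: Young's inequality converts it into $\varepsilon\|\sigma_h^n\|_{\X}^2 + C_\varepsilon\|\rho_h^n\|_{\X}^2$, but the left-hand side controls only $\langle A(t_n)\sigma_h^n,\sigma_h^n\rangle_{\X}$, which need not dominate $\|\sigma_h^n\|_{\X}^2$. To overcome this I would apply the coercivity $\alpha\|\sigma_h^n\|_{\X}^2 \leq \langle A(t_n)\sigma_h^n,\sigma_h^n\rangle_{\X} + \lambda\langle R\sigma_h^n,\sigma_h^n\rangle_{\Y}$ and then invoke the already-proved \eqref{desi-sigma} to bound $\langle R\sigma_h^n,\sigma_h^n\rangle_{\Y}$ by exactly the right-hand side of \eqref{estimasumaderivada-R} (noting $\langle R\sigma_h^0,\sigma_h^0\rangle_{\Y}\leq C\|\sigma_h^0\|_{\X}^2$ through boundedness of $R$ together with $\X\hookrightarrow\Y$). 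A final discrete Gronwall argument applied to the remaining $\Delta t\sum_k\|\sigma_h^k\|_{\X}^2$ residual on the right then yields \eqref{estimasumaderivada-R}.
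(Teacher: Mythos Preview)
Your proposal is correct and follows essentially the same route as the paper: the same error identity, the same choices of test function ($v=\sigma_h^k$ for \eqref{desi-sigma}, $v=\dif\sigma_h^k$ for \eqref{estimasumaderivada-R}), the same use of the discrete energy lower bounds for $R$ and $A$, the same Abel/telescoping treatment of the $A(t_k)$--terms via \eqref{Ap-bounded}, and the same appeal to \eqref{desi-sigma} to close the second estimate.

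Two minor points of comparison. First, for the terminal boundary term $\langle A(t_n)\rho_h^n,\sigma_h^n\rangle_{\X}$ the paper avoids your detour through coercivity plus \eqref{desi-sigma}: since $A(t_n)$ is monotone and self-adjoint it satisfies a Cauchy--Schwarz inequality in its own semi-inner product (exactly as you used for $R$), giving directly $\langle A(t_n)\rho_h^n,\sigma_h^n\rangle_{\X}\le \langle A(t_n)\rho_h^n,\rho_h^n\rangle_{\X}+\tfrac14\langle A(t_n)\sigma_h^n,\sigma_h^n\rangle_{\X}$, and the second piece is absorbed straight into the left-hand side. Your route works too, but this trick is cleaner. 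Second, your ``final discrete Gronwall'' step is not a Gronwall situation: the left-hand side does not dominate $\|\sigma_h^k\|_{\X}^2$, so the residual $\Delta t\sum_k\|\sigma_h^{k-1}\|_{\X}^2$ is disposed of simply by one more application of \eqref{desi-sigma} (together with $\langle R\sigma_h^0,\sigma_h^0\rangle_{\Y}\le C\|\sigma_h^0\|_{\X}^2$), exactly as you already did for the $\langle R\sigma_h^n,\sigma_h^n\rangle_{\Y}$ contribution.
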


\begin{proof}
Let $n\in\{1,\dots,N\}$, $k\in\{1,\dots,n\}$ and $v\in\V_h$. Then, from Problem~\ref{ppdc} and Problem~\ref{ppdd}, it follows that
\begin{equation}\label{indentityv}
\langle R\dif \sigma_h^k,v\rangle_{\Y}
+\langle A(t_k)\sigma_h^k,v\rangle_{\X}
=\langle R\tau^k,v\rangle_{\Y}
-\langle R\dif\rho_h^k,v\rangle_{\Y}
-\langle A(t_k)\rho_h^k,v\rangle_{\X}\qquad\forall v\in \V_h.
\end{equation}

By testing this previous identity with $v=\sigma_h^k\in \V_h$, we have
\begin{equation}\label{identitysigma}
\langle R\dif \sigma_h^k,\sigma_h^k\rangle_{\Y}
+\langle A(t_k)\sigma_h^k,\sigma_h^k\rangle_{\X}
=\langle R\tau^k,\sigma_h^k\rangle_{\Y}
-\langle R\dif\rho_h^k,\sigma_h^k\rangle_{\Y}
-\langle A(t_k)\rho_h^k,\sigma_h^k\rangle_{\X}.
\end{equation}
Using the fact that $R$ is monotone and self-adjoint, the first term of the left-hand term in the previous identity satisfies
\begin{equation*}
\langle R\dif \sigma_h^k,\sigma_h^k\rangle_{\Y} 
\geq  \dfrac{1}{2\Delta t} \left\{\langle R\sigma_h^k,\sigma_h^k\rangle_{\Y}  
- \langle R\sigma_h^{k-1},\sigma_h^{k-1}\rangle_{\Y}
\right\},
\end{equation*}
by recalling \eqref{existencia}, there exist $\lambda,\alpha>0$ such that
\[
\langle A(t_k)\sigma_h^k,\sigma_h^k\rangle_{\X} 
\geq \alpha \Vert{\sigma_h^k}\Vert_{\V}^{2} 
-\lambda \langle R\sigma_h^k,\sigma_h^k\rangle_{\Y}, 
\]
thus, replacing in \eqref{identitysigma}, it follows that
\begin{equation}\label{pre1B}
\begin{split}
\dfrac{1}{2\Delta t} \left[\langle R\sigma_h^k,\sigma_h^k\rangle_{\Y}  
- \langle R\sigma_h^{k-1},\sigma_h^{k-1}\rangle_{\Y}\right] 
&+ \alpha \Vert{\sigma_h^k}\Vert_{\V}^{2} 
-\lambda \langle R\sigma_h^k,\sigma_h^k\rangle_{\Y}\\
&\leq\langle R\tau^k,\sigma_h^k\rangle_{\Y}
-\langle R\dif\rho_h^k,\sigma_h^k\rangle_{\Y}
-\langle A(t_k)\rho_h^k,\sigma_h^k\rangle_{\X}
\end{split}
\end{equation}
Now, since the operator $R$ is monotone and self-adjoint, it satisfies the following Cauchy-Schwarz type inequality
\begin{equation}\label{cauchy-R2}
\vert\langle Rv,w\rangle_{\Y}\vert\leq \langle Rv,v\rangle_{\Y}^{1/2}  \langle Rw,w\rangle_{\Y}^{1/2}
\end{equation}
then, we have
\begin{align*}
&\vert\langle R\tau^k,\sigma_h^k\rangle_{\Y}\vert \leq \dfrac{1}{4} \langle R\sigma_h^k,\sigma_h^k\rangle_{\Y} +  \langle R\tau^k,\tau^k\rangle_{\Y},\quad 
&\vert\langle R\dif\rho_h^k,\sigma_h^k\rangle_{\Y}\vert\leq \dfrac{1}{4} \langle R\sigma_h^k,\sigma_h^k\rangle_{\Y} +  \langle R\dif\rho_h^k,\dif\rho_h^k\rangle_{\Y}.
\end{align*}
On the other hand, by using the uniform continuity of the family of operators $A$, we can notice that
\[
\vert\langle A(t_k)\rho_h^k,\sigma_h^k\rangle_{\X}\vert \leq \dfrac{\alpha}{2} \Vert{\sigma_h^k}\Vert_{\V}^{2} + \dfrac{1}{2\alpha}\Vert A\Vert ^2 \Vert{\rho_h^k}\Vert_{\V}^{2}.
\]
Therefore, by replacing the previous inequalities in \eqref{pre1B} and using the fact that $R$ is a bounded operator and $X\subset Y$ is a continuous embedding, we deduce
\begin{align*}
\langle R\sigma_h^k,\sigma_h^k\rangle_{\Y}
-\langle R\sigma_h^{k-1},\sigma_h^{k-1}\rangle_{\Y}
&+\alpha\Delta t \|\sigma_h^k\|^2_{\X}\\
&\leq 
(1+2\lambda)\Delta t \langle R\sigma_h^k,\sigma_h^k\rangle_{\Y}
+ C\Delta t\left[
\Vert\tau^k\Vert_{Y}^2 
+ \Vert\dif\rho_h^k\Vert_{\X}^2
+ \Vert{\rho_h^k}\Vert_{\V}^{2}
\right].
\end{align*}
Hence, by summing over $k$, we obtain
\begin{align*}
&\langle R\sigma_h^n,\sigma_h^n\rangle_{\Y}
-\langle R\sigma_h^0,\sigma_h^0\rangle_{\Y}
+\alpha\Delta t\sum_{k=1}^n\|\sigma_h^k\|^2_{\V}\\
&\qquad\qquad\qquad\qquad
\leq(1+2\lambda)\Delta t\sum_{k=1}^n\langle R\sigma_h^k,\sigma_h^k\rangle_{\Y}
+C\Delta t\,\sum_{k=1}^n 
\left[
\Vert\tau^k\Vert_{Y}^2 
+ \Vert\dif\rho_h^k\Vert_{\X}^2
+ \Vert{\rho_h^k}\Vert_{\V}^{2}
\right].
\end{align*}
Then, if $\Delta t$ is small enough such that $(1+2\lambda)\Delta t \leq \frac12$, we have 
\begin{equation}\label{Cota1dg}
\begin{split}
&\dfrac12\langle R\sigma_h^n,\sigma_h^n\rangle_{\Y}+\alpha\Delta t\sum_{k=1}^n\|\sigma_h^k\|^2_{\V}\\
&\qquad\qquad
\leq
\langle R\sigma_h^0,\sigma_h^0\rangle_{\Y} +
(1+2\lambda)\Delta t\sum_{k=1}^{n-1}\langle R\sigma_h^k,\sigma_h^k\rangle_{\Y}+C\Delta t\,\sum_{k=1}^n 
\left[
\Vert\tau^k\Vert_{Y}^2 
+ \Vert\dif\rho_h^k\Vert_{\X}^2
+ \Vert{\rho_h^k}\Vert_{\V}^{2}
\right],
\end{split}
\end{equation}
which implies 
\[
\langle R\sigma_h^n,\sigma_h^n\rangle_{\Y}
\leq
2\langle R\sigma_h^0,\sigma_h^0\rangle_{\Y} +
2(1+2\lambda)\Delta t\sum_{k=1}^{n-1}\langle R\sigma_h^k,\sigma_h^k\rangle_{\Y}+C\Delta t\,\sum_{k=1}^n 
\left[
\Vert\tau^k\Vert_{Y}^2 
+ \Vert\dif\rho_h^k\Vert_{\X}^2
+ \Vert{\rho_h^k}\Vert_{\V}^{2}
\right].
\]
Therefore, by using the discrete Gronwall's Lemma (see, for instance, \cite[Lemma 1.4.2]{quarteronivalli}), we obtain
\begin{align*}
&\langle R\sigma_h^n,\sigma_h^n\rangle_{\Y}
\leq
C\left\{
\langle R\sigma_h^0,\sigma_h^0\rangle_{\Y} 
+\Delta t\,\sum_{k=1}^n 
\left[
\Vert\tau^k\Vert_{Y}^2 
+ \Vert\dif\rho_h^k\Vert_{\X}^2
+ \Vert{\rho_h^k}\Vert_{\X}^{2}
\right]\right\}.
\end{align*}
Hence, by using this inequality to estimate the second term in the right-hand term of \eqref{Cota1dg}, we deduce \eqref{desi-sigma}.

Next, we want to prove \eqref{estimasumaderivada-R} by assuming that each $A(t)$ is monotone and \eqref{Ap-bounded} holds true.
In fact, by taking $v=\dif\sigma_h^k\in \X_h$ in \eqref{indentityv}, we obtain 
\begin{align}\label{testingdifsigma-R}
\langle R \dif\sigma_h^k,\dif\sigma_h^k\rangle_{\Y}+\langle A(t_k)\sigma_h^k,\dif\sigma_h^k\rangle_{\X}
=
\langle R \tau^k,\dif\sigma_h^k\rangle_{\Y}-\langle R\dif \rho_h^k,\dif \sigma_h^k\rangle_{\Y}-\langle A(t_k)\rho_h^k,\dif\sigma_h^k\rangle_{\X }.
\end{align}
Now, since each operator $A(t)$ is monotone and self-adjoint, it follows
\begin{equation*}
\langle A(t_k)\dif \sigma_h^k,\sigma_h^k\rangle_{\X} 
\geq  \dfrac{1}{2\Delta t} \left\{\langle A(t_k)\sigma_h^k,\sigma_h^k\rangle_{\X}  
- \langle A(t_k)\sigma_h^{k-1},\sigma_h^{k-1}\rangle_{\X}
\right\},
\end{equation*}
and therefore
\begin{equation}\label{cotaoperatorAsigma-R}
\begin{split}
&\langle A(t_k)\sigma_h^k,\dif\sigma_h^k\rangle_{\X}\\
&\quad \geq
\frac{1}{2\Delta t}\left[ \langle A(t_k)\sigma_h^k,\sigma_h^k\rangle_{\X}-\langle A(t_{k-1})\sigma_h^{k-1},\sigma_h^{k-1}\rangle_{\X} \right]
-\frac{1}{2\Delta t}
\left\langle\left(\int_{t_{k-1}}^{t_k} A'(t)dt\right) \sigma_h^{k-1},\sigma_h^{k-1}\right\rangle_{\X}.
\end{split}
\end{equation}
On the other hand, a straightforward computation shows that
\begin{equation}\label{cotaoperatorArho-R}
\begin{split}
\langle A(t_k)\rho_h^k,\dif\sigma_h^k\rangle_{\X}&=
\frac{1}{\Delta t}\left[ \langle A(t_k)\rho_h^k,\sigma_h^k\rangle_{\X}-\langle A(t_{k-1})\rho_h^{k-1},\sigma_h^{k-1}\rangle_{\X} \right]-\langle A(t_k)\dif\rho_h^k,\sigma_h^{k-1}\rangle_{\X}\\
&\quad-\frac{1}{\Delta t}
\left\langle\left(\int_{t_{k-1}}^{t_k} A'(t)dt\right) \rho_h^{k-1},\sigma_h^{k-1}\right\rangle_{\X}.
\end{split}
\end{equation}
Hence, by using \eqref{cotaoperatorAsigma-R} and \eqref{cotaoperatorArho-R} in \eqref{testingdifsigma-R}, we have
\begin{equation}\nonumber
\begin{split}
\langle R \dif\sigma_h^k,\dif\sigma_h^k\rangle_{\Y}
&+\frac{1}{2\Delta t}\left[ \langle A(t_k)\sigma_h^k,\sigma_h^k\rangle_{\X}-\langle A(t_{k-1})\sigma_h^{k-1},\sigma_h^{k-1}\rangle_{\X} \right]
\\
&\leq 
\langle R \tau^k,\dif\sigma_h^k\rangle_{\Y}-\langle R\dif \rho_h^k,\dif \sigma_h^k\rangle_{\Y}
-\frac{1}{\Delta t}\left[ \langle A(t_k)\rho_h^k,\sigma_h^k\rangle_{\X}-\langle A(t_{k-1})\rho_h^{k-1},\sigma_h^{k-1}\rangle_{\X} \right]\\
&\hspace*{2cm} +\langle A(t_k)\dif\rho_h^k,\sigma_h^{k-1}\rangle_{\X}
+ \frac{1}{\Delta t}
\left\langle\left(\int_{t_{k-1}}^{t_k} A'(t)dt\right) (\rho_h^{k-1}+ \sigma_h^{k-1}),\sigma_h^{k-1}\right\rangle_{\X},
\end{split}
\end{equation}
then, recalling that the family of operators $A(t)$ is uniformly bounded and that the operator $R$ is also bounded, using \eqref{cauchy-R2} and \eqref{Ap-bounded}, it follows that
\begin{align*}
&\frac12\langle R\dif \sigma_h^k,\dif \sigma_h^k\rangle_{\Y}+
\frac{1}{2\Delta t}\left\{\langle A(t_k)\sigma_h^k,\sigma_h^k\rangle_{\X}-\langle A(t_{k-1})\sigma_h^{k-1},\sigma_h^{k-1}\rangle_{\X}\right\}\\
&\quad\leq 
-\frac{1}{\Delta t}\left\{\langle A(t_k)\rho_h^k,\sigma_h^k\rangle_{\X}-\langle A(t_{k-1})\rho_h^{k-1},\sigma_h^{k-1}\rangle_{\X}\right\}
+C \left\{\|\sigma_h^{k-1}\|_{\X}^2+ \|\tau^k\|_{\Y}^2+\|\dif \rho_h^k\|_{\X}^2+\|\rho_h^{k-1}\|_{\X}^2\right\},
\end{align*}
%
%
then, multiplying by $2\Delta t$, summing over $k$ and using the fact that 
$\langle A(t_n)\rho_h^n,\sigma_h^n\rangle_{\X}\leq \langle A(t_n)\rho_h^n,\rho_h^n\rangle_{\X}+\frac{1}{4}\langle A(t_n)\sigma_h^n,\sigma_h^n\rangle_{\X}$,  
we obtain
\begin{multline*}
\Delta t\sum_{k=1}^n\langle R \dif\sigma_h^k,\dif\sigma_h^k\rangle_{\Y}
+\frac{1}{2}\langle A(t_n)\sigma_h^n,\sigma_h^n\rangle_{\X}
\\ \qquad
\leq
\langle A(0)(2\rho_h^{0}+\sigma_h^{0}),\sigma_h^{0}\rangle_{\X}
+2\langle A(t_n)\rho_h^n,\rho_h^n\rangle_{\X}
+C \Delta t \sum_{k=1}^n\left\{\|\sigma_h^{k-1}\|_{\X}^2+ \|\tau^k\|_{\Y}^2+\|\dif \rho_h^k\|_{\X}^2+\|\rho_h^{k-1}\|_{\X}^2\right\}.
\end{multline*}
Finally, using \eqref{desi-sigma} to estimate the sum involving $\|\sigma_h^{k-1}\|_{\X}$ and recalling $A(t)$ is uniformly bounded and monotone,
we deduce \eqref{estimasumaderivada-R}.
\end{proof}

Now, we are in a position to prove the following error estimate.

\begin{theorem}\label{teo-deg-u}
If $u\in\H^1(0,T;\X)\cap H^2(0,T;\Y)$, then
there exists a constant $C>0$, independent of  $h$ and  $\Delta t$, such that
\begin{equation}\label{esti-eh} 
\begin{split}
&\max_{1 \leq n \leq N}\langle R(u(t_n)-u_h^n),u(t_n)-u_h^n\rangle_{\Y}
+\Delta t\,\sum_{n=1}^N\|u(t_n)-u_h^n\|^2_{\V}
\\
&\leq C\left\{ {\| u_0 -u_{0,h} \|_{\Y}^2} +\hspace*{-1mm}\max_{0 \leq n \leq N}\hspace*{-1mm}\left[ \inf_{v\in \X_h}{\|u(t_{n}) - v\|_{\X}^2}\right] 
\hspace*{-1mm}+\hspace*{-1mm}\int_0^T\hspace*{-2mm} \inf_{v\in\X_h} \| \partial_{t} u(t)-v \|_{\X}^2\, dt 
+ (\Delta t)^2\hspace*{-1mm}
\int_0^T \hspace*{-2mm}\| \partial_{tt} u(t)\|^2_{\Y}\,dt
\right\}.
\end{split}
\end{equation}
Furthermore, if $u_0\in\X$ and for each $t\in[0,T]$ the operator $A(t)$ is monotone and \eqref{Ap-bounded} holds true, then there exists a 
constant $C>0$, independent of  $h$ and  $\Delta t$, satisfying
\begin{equation}\label{esti-deh}
\begin{split}
 &\Delta t \sum_{k=1}^n\produ{R(\partial_{t}u(t_k)-\dif u_h^k),(\partial_{t}u(t_k)-\dif u_h^k)}_{\Y}
+\max_{1 \leq n \leq N}\langle A(t_n)(u(t_n)-u_h^n),u(t_n)-u_h^n\rangle_{\X}
\\
&
\leq C\left\{ {\| u_0 -u_{0,h} \|_{\X}^2} +\hspace*{-1mm}\max_{0 \leq n \leq N}\hspace*{-1mm}\left[ \inf_{v\in \X_h}{\|u(t_{n}) - v\|_{\X}^2}\right] 
\hspace*{-1mm}+\hspace*{-1mm}\int_0^T\hspace*{-2mm} \inf_{v\in\X_h} \| \partial_{t} u(t)-v \|_{\X}^2\, dt 
+ (\Delta t)^2\hspace*{-1mm}
\int_0^T \hspace*{-2mm}\| \partial_{tt} u(t)\|^2_{\Y}\,dt
\right\}.
\end{split}
\end{equation}
\end{theorem}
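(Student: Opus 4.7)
The plan is to use the classical error-splitting approach from the finite-element analysis of parabolic problems, adapted to the degenerate case via Lemma~\ref{lemdg}. Writing $e_h^n=\rho_h^n+\sigma_h^n$ as in \eqref{splitE-d}--\eqref{rhoysigmad}, I would control the projection error $\rho_h^n$ directly from \eqref{infprodg}, and control the discrete-error part $\sigma_h^n$ by feeding estimates of the time-truncation and projection pieces into the abstract bounds \eqref{desi-sigma}--\eqref{estimasumaderivada-R}. Triangle inequalities, together with the boundedness of $R$ and the continuous embedding $\X\subset\Y$, then assemble the final estimates.

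The first concrete step is to rewrite the right-hand sides of Lemma~\ref{lemdg} in terms of the quantities appearing in \eqref{esti-eh}--\eqref{esti-deh}. Taylor's formula with integral remainder gives
\[
\tau^k=-\frac{1}{\Delta t}\int_{t_{k-1}}^{t_k}(s-t_{k-1})\,\partial_{tt}u(s)\,ds,
\]
so a Cauchy--Schwarz estimate in the time variable yields $\Delta t\sum_{k=1}^{N}\|\tau^k\|_\Y^2\leq C(\Delta t)^2\int_0^T\|\partial_{tt}u(s)\|_\Y^2\,ds$. Since $\Pi_h$ is linear and time-independent it commutes with $\partial_t$, so $\partial_t\rho_h(s)=\partial_tu(s)-\Pi_h\partial_tu(s)$ and $\dif\rho_h^k=(\Delta t)^{-1}\int_{t_{k-1}}^{t_k}\partial_t\rho_h(s)\,ds$; combining with \eqref{infprodg} produces
\[
\Delta t\sum_{k=1}^{N}\|\dif\rho_h^k\|_\X^2\leq \int_0^T\inf_{v\in\X_h}\|\partial_tu(s)-v\|_\X^2\,ds.
\]
Applying \eqref{infprodg} pointwise, $\Delta t\sum_{k=1}^{N}\|\rho_h^k\|_\X^2\leq T\max_{0\leq n\leq N}\inf_{v\in\X_h}\|u(t_n)-v\|_\X^2$, and splitting $\sigma_h^0=(\Pi_hu_0-u_0)+(u_0-u_{0,h})$ together with the boundedness of $R$ and the embedding $\X\subset\Y$ yields $\langle R\sigma_h^0,\sigma_h^0\rangle_\Y\leq C\bigl(\inf_{v\in\X_h}\|u_0-v\|_\X^2+\|u_0-u_{0,h}\|_\Y^2\bigr)$.

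Inserting these four ingredients into \eqref{desi-sigma} bounds $\max_n\langle R\sigma_h^n,\sigma_h^n\rangle_\Y+\Delta t\sum_n\|\sigma_h^n\|_\X^2$ by the right-hand side of \eqref{esti-eh}; adding the $\rho_h^n$ contribution (using $\langle R\rho_h^n,\rho_h^n\rangle_\Y\leq C\|\rho_h^n\|_\X^2$) through $(a+b)^2\leq 2a^2+2b^2$ then finishes \eqref{esti-eh}. The bound \eqref{esti-deh} follows the same pattern applied to \eqref{estimasumaderivada-R}: $\sigma_h^0$ and $\rho_h^0$ are now measured in the $\X$-norm (hence the stronger hypothesis $u_0\in\X$), and for the discrete time-derivative term the decomposition $\partial_tu(t_k)-\dif u_h^k=-\tau^k+\dif\rho_h^k+\dif\sigma_h^k$, combined with \eqref{cauchy-R2}, the boundedness of $R$ and $\X\subset\Y$, reduces the control to the $R$-weighted sum in \eqref{estimasumaderivada-R} plus the $\tau^k$ and $\dif\rho_h^k$ sums already handled. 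For the $A(t_n)$-term I would use the seminorm triangle inequality $\langle A(t_n)(a+b),a+b\rangle_\X\leq 2\langle A(t_n)a,a\rangle_\X+2\langle A(t_n)b,b\rangle_\X$ (valid since $A(t_n)$ is monotone and self-adjoint) together with uniform boundedness of $A(t_n)$ to dominate $\langle A(t_n)\rho_h^n,\rho_h^n\rangle_\X$ by $\|\rho_h^n\|_\X^2$. The step requiring the most care is the bookkeeping of norms: $\tau^k$ and the $R$-weighted pieces naturally live in $\Y$, whereas the $A$-weighted and projection pieces live in $\X$, and this is precisely what allows the time-regularity hypothesis to remain $u\in \H^2(0,T;\Y)$ rather than the stronger $\H^2(0,T;\X)$.
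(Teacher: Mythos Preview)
Your proposal is correct and mirrors the paper's proof almost step for step: the same projection-based splitting $e_h^n=\rho_h^n+\sigma_h^n$, the same Taylor/Cauchy--Schwarz bounds for $\tau^k$ and $\dif\rho_h^k$, the commutation $\partial_t\Pi_h=\Pi_h\partial_t$, and the same assembly through Lemma~\ref{lemdg} followed by triangle inequalities for the $R$- and $A$-weighted seminorms. One small refinement worth noting: when you split $\sigma_h^0$ you implicitly use $u(0)=u_0$, but the continuous problem only guarantees $Ru(0)=Ru_0$; the paper handles this by writing $\sigma_h^0=e_h^0-\rho_h^0$ with $e_h^0=u(0)-u_{0,h}$ and then invoking the Cauchy--Schwarz inequality \eqref{cauchy-R2} together with $Ru(0)=Ru_0$ to pass to $\langle R(u_0-u_{0,h}),u_0-u_{0,h}\rangle_\Y$.
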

\begin{proof}
First of all, we notice that \eqref{infprodg} and \eqref{rhoysigmad} imply
\begin{equation}\label{ineq1-cea}
\|\rho_h^n\|_{\X}= \|\rho_h(t_n)\|_{\X}\leq C \inf_{z\in \X_h}\|u(t_n)-z\|_{\X}.
\end{equation}
Moreover, the regularity assumption about $u$ implies $\partial_{t}\Pi_h u(t)=\Pi_h(\partial_{t}u(t))$, and consequently
\[
\|\partial_{t}\rho_h(t)\|_{\X}\leq C \inf_{z\in \X_h}\|\partial_{t} u(t)-z\|_{\X}.
\]
Hence, it is easy to check that
\begin{align*}
\Delta t\sum_{k=1}^N \|\dif \rho_h^k \|^2_{\X} 
=\Delta t\sum_{k=1}^N\left\|\frac{1}{\Delta t}\int_{t_{k-1}}^{t_k} \hspace*{-2mm}\partial_t\rho_h(t)\dt\right\|_X^2
\leq\sum_{k=1}^N \int_{t_{k-1}}^{t_k} \hspace*{-2mm}\|\partial_t\rho_h(t)\|_X^2\dt
\leq  C \int_0^T \hspace*{-2mm} \inf_{v\in\V_h} \| \partial_{t} u(t)-v \|_{\V}^2\, dt.
\end{align*}
On the other hand, by combining a Taylor expansion with the Cauchy-Schwarz inequality, we obtain 
\begin{equation}\nonumber
\sum_{k=1}^N \| \tau^k \|^2_{\Y}
=\sum_{k=1}^N \left\|\frac{1}{\Delta t} \int_{t_{k-1}}^{t_{k}} (t_{k-1}-t)\partial_{tt}u(t)\dt\right\|_{Y}^2
\leq \Delta t \int_0^T\|\partial_{tt}u(t)\|_{\Y}^2 \dt.
\end{equation}
Now, by writing $\sigma_h^0=e_h^0-\rho_h^0$ and using the fact that $R$ is self-adjoint and  
monotone\footnote{Notice that if $R$ is self-adjoint and  monotone, we have $\langle R(v+w),(v+w)\rangle_{\Y}\leq 2\left[ \langle R(v),v\rangle_{\Y} + \langle R(w),w\rangle_{\Y}\right]$ for any $v,w\in\Y$.}, 
from the second equation of Problem~\ref{ppdc}, it follows that
\begin{equation}
\label{ineq4-cea}
\langle R\sigma_h^0,\sigma_h^0\rangle_{\Y}\leq 2\langle R(u_0-u_{0,h}),u_0-u_{0,h}\rangle_{\Y}+2\langle R\rho_h^0,\rho_h^0\rangle_{\Y}.
\end{equation}
By using inequalities \eqref{ineq1-cea}--\eqref{ineq4-cea} and Lemma~\ref{lemdg}, \eqref{esti-eh} follows from the fact that 
$u(t_n)-u_h^n=\rho_h^n+\sigma_h^n$ (see \eqref{splitE-d}) and the triangle inequality. 

Next, we need to deduce \eqref{esti-deh}. To this aim, we first recall that $\partial_{t}u(t_k)-\dif u_h^k = \left[\dif u(t_k)-\dif u_h^k\right]-\tau^k$,
then, by using \eqref{splitE-d} it follows 
$ \partial_{t}u(t_k)-\dif u_h^k = \left(\dif \rho_h^k + \dif \sigma_h^k\right) - \tau^k$.  Therefore, it is easy to obtain
\begin{equation*}
\produ{R(\partial_{t}u(t_k)-\dif u_h^k),\partial_{t}u(t_k)-\dif u_h^k}_{\Y}
\leq C \left[
\langle R\dif \sigma_h^k,\dif \sigma_h^k\rangle_{\Y} + \| {\dif \rho_h^k}\|_{\Y}^2 + \| {\tau^k}\|_{\Y}^2
\right].
\end{equation*}
Consequently, \eqref{esti-deh} follows by using \eqref{estimasumaderivada-R}, by proceeding as in the proof of \eqref{esti-eh} and noticing that
\begin{equation}\nonumber
 \Delta t\,\sum_{n=1}^N \inf_{v\in\X_h}{\|u(t_n)-v\|^2_{\X}}
 \leq  
 T 
 \max_{1 \leq n \leq N}\left[ \inf_{v\in \V_h}{\|u(t_{n}) - v\|_{\V}^2} \right]. 
\end{equation} 
\end{proof}
\section{Application to the eddy current problem}\label{aplicaciones-d}

The eddy current model is obtained by dropping the displacement currents from Maxwell equations  \cite[chapter 8]{bossavit}) and it provides a reasonable approximation to the solution of the full Maxwell system in the low frequency range (see \cite{AB}). This model is commonly used in many problems in science and industry: induction heating, electromagnetic braking, electric generation, etc (see \cite[Chapter 9]{alonsovallilibro}). The purpose for the eddy current problem is to determine the eddy currents induced a three-dimensional conducting domain $\ocA$ by a given time dependent compactly-supported current density $\Jn$. The eddy current problem can be read as follows.
\begin{problem}
\label{eddy-d}
Find the magnetic field $\hh:\R^3\times[0,T]\to\R^3$ and the electric field $\ee:\R^3\times[0,T]\to\R^3$ satisfying
\begin{align*}
\partial_t \left(\mu \hh\right)+ \curl \ee &= \cero,
\\
\curl \hh &= \jj + \sigma \ee,
\\
\dive (\varepsilon \ee) &= 0, 
\\
\dive (\mu \hh) &= 0, 
\end{align*}
where $\mu$, $\sigma$ and $\varepsilon$ represent the physical (scalar) parameters respectively called magnetic permeability, electric conductivity and electric permittivity.
\end{problem}

We assume that these parameters are piecewise smooth real valued functions satisfying:
\begin{align*}
&\varepsilon_{\max}\geq \varepsilon(\xn)\geq \varepsilon_{\min} > 0\ \quad\pct \text{ in } \ocA
\quad\text{and}\quad\varepsilon(\xn)= \varepsilon_{\min} \quad \pct\text{ in }\R^3\setminus\occA,
\\
&\sigma_{\max}\geq \sigma(\xn)\geq \sigma_{\min} > 0 \quad\pct \text{ in } \ocA
\quad\text{and}\quad\sigma(\xn)= 0 \ \quad \quad \pct\text{ in }\R^3\setminus\occA,
\\
&\mu_{\max}\geq \mu(\xn)\geq \mu_{\min} > 0 \quad\pct \text{ in } \ocA
\quad\text{and}\quad\mu(\xn)= \mu_{\min}  \quad \pct\text{ in }\R^3\setminus\occA.
\end{align*}
Different formulations for the eddy current model (\cite{zlamal, maccamy, reales}) can be analyzed as a degenerate parabolic problem of Section~\ref{degenerate} and the mathematical analysis of their numerical approximation by using finite element methods can be obtained with the theory performed in Sections \ref{discreto-d} and \ref{error-d}, however we only focus in the formulation studied in the first of that references. Zlamal \cite{zlamal} (see also \cite{zlamal-ad}) has proposed a solution of a particular case of the eddy current Problem~\ref{eddy-d} by solving the following two-dimensional degenerate parabolic problem, for a given data source $\jd:\R^2\times [0,T]\to \R$.
\begin{problem}
\label{two-d}
Find $u:\R^2\times[0,T]\to \R$ such that
\begin{equation}\label{vari-two-d}
\sigma\deriparc{u}{t}=\dive\left(\dfrac{1}{\mu}\grad u\right)+ \jd,
\end{equation}
where the physical parameters $\sigma$ and $\mu$ are independent of $x_3$.
\end{problem}
The following result shows the relationship between the eddy current Problem~\ref{eddy-d} and the degenerate parabolic equation Problem~\ref{two-d}. 
\begin{proposition}\label{rela-two-three}
If $u:\R^2\times[0,T]\to\R$ is an enough regular solution of Problem~\ref{two-d}  and the electric permittivity $\varepsilon$ is independent of $x_3$, then
\begin{align}\label{def-EH}
\Em:=\paren{0,0,-\deriparc{u}{t}}\quad\text{and}\quad\Hm:=\dfrac{1}{\mu}\paren{\deriparc{u}{x_2},-\deriparc{u}{x_1},0}
\end{align}
are solutions of problem Problem~\ref{eddy-d} with 
$\Jn:=\paren{0,0,\jd}$.
\end{proposition}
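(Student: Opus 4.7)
The plan is direct verification: substitute the proposed $\Em$ and $\Hm$ into each of the four equations of Problem~\ref{eddy-d} and reduce each to either a consequence of \eqref{vari-two-d} or a trivial identity. The proposition is essentially a bookkeeping check, so the "proof" is a sequence of short componentwise computations, and the only thing one has to be careful about is tracking which quantities are independent of $x_3$.

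First I would record the standing structural facts: $u$, $\mu$, $\sigma$, and (by hypothesis) $\varepsilon$ are independent of $x_3$, so $\Em$ has only a third component depending on $(x_1,x_2,t)$, while $\mu\Hm=(\partial_{x_2}u,-\partial_{x_1}u,0)$ has only in-plane components depending on $(x_1,x_2,t)$. This immediately makes the two divergence equations trivial: $\dive(\mu\Hm)=\partial_{x_1}\partial_{x_2}u-\partial_{x_2}\partial_{x_1}u=0$ by equality of mixed partials, and $\dive(\varepsilon\Em)=\partial_{x_3}(-\varepsilon\,\partial_t u)=0$ since neither $\varepsilon$ nor $u$ depends on $x_3$.

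Next I would verify Faraday's law $\partial_t(\mu\Hm)+\curl\Em=\cero$. Compute $\partial_t(\mu\Hm)=(\partial_{x_2}\partial_t u,-\partial_{x_1}\partial_t u,0)$ using that $\mu$ is time-independent, and $\curl\Em=(\partial_{x_2}(-\partial_t u),-\partial_{x_1}(-\partial_t u),0)=(-\partial_{x_2}\partial_t u,\partial_{x_1}\partial_t u,0)$; the two cancel. This is the step that uses only the definitions and commutativity of partial derivatives, with no reference to the PDE.

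The only step that invokes \eqref{vari-two-d} is Amp\`ere's law $\curl\Hm=\Jn+\sigma\Em$. Writing $\Hm=(H_1,H_2,0)$ with $H_1=\tfrac{1}{\mu}\partial_{x_2}u$ and $H_2=-\tfrac{1}{\mu}\partial_{x_1}u$, and noting again the $x_3$-independence, the only nonzero component of $\curl\Hm$ is
\[
\partial_{x_1}H_2-\partial_{x_2}H_1=-\partial_{x_1}\!\left(\tfrac{1}{\mu}\partial_{x_1}u\right)-\partial_{x_2}\!\left(\tfrac{1}{\mu}\partial_{x_2}u\right)=-\dive\!\left(\tfrac{1}{\mu}\grad u\right).
\]
By \eqref{vari-two-d} this equals $\jd-\sigma\,\partial_t u$, so $\curl\Hm=(0,0,\jd)+(0,0,-\sigma\,\partial_t u)=\Jn+\sigma\Em$, as required.

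There is no real obstacle here; the mild subtlety is making sure each of the hypotheses (2D data, $\varepsilon$ independent of $x_3$) is used exactly where it is needed, and that "enough regular" is read as "all the second-order partial derivatives of $u$ appearing above exist and Schwarz's theorem applies," which justifies both the mixed-partial cancellation in $\dive(\mu\Hm)$ and the interchange of $\partial_t$ with the spatial derivatives in $\partial_t(\mu\Hm)$.
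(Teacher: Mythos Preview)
Your proof is correct and follows essentially the same approach as the paper: direct componentwise verification of each of the four equations in Problem~\ref{eddy-d}, with Amp\`ere's law being the one step that invokes \eqref{vari-two-d}, the divergence equations following from $x_3$-independence and Schwarz's theorem, and Faraday's law from a direct curl computation. The only difference is cosmetic (you treat the divergence equations first, the paper treats them last), and you are slightly more explicit about where each hypothesis is used.
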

\begin{proof}
Let $u$ be a regular solution of Problem~\ref{two-d} and assume that $\Jn:=\paren{0,0,\jd}$. Let us define $\Em$ and $\Hm$ as in \eqref{def-EH}. 
Therefore, 
\begin{equation*}
\curl\Em
=\left(
-\dfrac{\partial}{\partial x_2}\left(\dfrac{\partial u}{\partial t} \right),\dfrac{\partial}{\partial x_1}\left(\dfrac{\partial u}{\partial t} \right),0\right)\\
=-\dfrac{\partial }{\partial t}(\mu\Hm),
\end{equation*}
and the first equation of Problem~\ref{eddy-d} follows. Furthermore, the second equation of Problem~\ref{two-d} is obtained by noticing that
\begin{equation*}
\curl\Hm=\left(0,0,-\dfrac{\partial}{\partial x_1}\left(\frac{1}{\mu}\dfrac{\partial u}{\partial x_1}\right) 
- \dfrac{\partial}{\partial x_2}\left(\frac{1}{\mu}\dfrac{\partial u}{\partial x_2}\right)\right)
=\left(0,0,-\dive\left(\frac{1}{\mu}\grad u\right)\right)
=\Jn + \sigma\Em.
\end{equation*} 
Next, by recalling that $u$ and $\varepsilon$  are independent of $x_3$, it follows the third equation of Problem~\ref{eddy-d}. Finally, the last equation of Problem~\ref{eddy-d} follows by using the regularity of $u$. 
\end{proof}
\subsection{Well-posedness for the eddy current formulation}
Let $\oA\subset\R^3$ be a  simply connected and bounded set containing  $\ocA$ and $\supp\Jn$, with $\Jn$ as in Proposition~\ref{rela-two-three}. 
In order to obtain a weak formulation for Problem~\ref{two-d}, we have to consider the projection of both sets $\oA$ and the conducting domain $\ocA$ onto the plane $x_1x_2$,
that will be denoted respectively as $\Omega$ and $\oc$.
Then, given $u_0\in\ldosoc$ and $\jd\in\ldosto$, by multiplying equation \eqref{vari-two-d} with $v\in\houno$ and integrating by parts over $\Omega$, we obtain the following weak formulation for the Problem~\ref{two-d}.
\begin{problem}
\label{two-dw1}
Find $u\in\ldoshouno$ such that
\begin{align*}
\dfrac{d}{dt}\int_{\oc} \sigma u v + \int_{\Omega} \dfrac{1}{\mu}\grad u\cdot \grad v &= \int_{\Omega} \jd v \quad\forall v\in\houno,\\
u(0)&=u_0 \quad\qquad \textrm{in }\oc.
\end{align*}
\end{problem}
The analysis of existence and uniqueness of solution for the previous problem is obtained by using Theorem~\ref{welld}. To this aim, in order to fit Problem~\ref{two-dw1} in the abstract structure of Problem~\ref{ppdc}, we have to define $X:=\houno$ and $Y:=\ldoso$, with their usual inner products. Then, we can easily deduce that these spaces satisfy the corresponding properties of Section~\ref{degenerate}. Furthermore, we define the operators $R:Y\to Y'$ and $A:X\to X'$ given by
\begin{align}
\left\langle Av,w\right\rangle_{\X}&:=\int_\Omega\dfrac1\mu\grad v\cdot\grad w\qquad\forall v,w\in X,\label{defA-d}\\
\left\langle Rv,w\right\rangle_{\Y}&:=\int_{\oc}\sigma v w\ \qquad\qquad\forall v,w\in Y.\label{defR-d}
\end{align}
We can notice that in this case the family of operators $A(t)$ in Problem~\ref{ppdc} is constant with respect of $t$.
Additionally, we need to define the function $f\in\L^2(0,T;X')$ given by
\begin{equation}\label{deff-d}
\left\langle f(t),v\right\rangle_{X}:=\int_\Omega\jd(t)v\qquad\forall v\in X.
\end{equation}
Finally, we should notice that the initial condition to Problem~\ref{two-dw1} is equivalent to $Ru(0)=Ru_0$ in $Y'$.

\begin{theorem}\label{well-eddy-two}
There exists a unique solution $u$ of Problem~\ref{two-dw1} satisfying
\begin{equation*}
\norm{u}_{\ldoshouno}\leq 
C\left\{
\norm{u_0}_{\ldosoc} + \norm{\jd}_{\ldosto}
\right\}.
\end{equation*}
\end{theorem}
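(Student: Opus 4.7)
The plan is to cast Problem~\ref{two-dw1} as an instance of the abstract Problem~\ref{ppdc} with the Hilbert spaces $X:=\mathrm{H}^1(\Omega)$, $Y:=\mathrm{L}^2(\Omega)$, and the operators $R$, $A$ and forcing $f$ defined in \eqref{defA-d}--\eqref{deff-d}, and then to verify the hypotheses of Theorem~\ref{welld} one by one. Since $A(t)\equiv A$ is independent of $t$, the ``regularity'' of the family is automatic (the map $t\mapsto \langle Au,v\rangle_X$ is constant and hence absolutely continuous with derivative $0$). The density and continuity of the embedding $X\hookrightarrow Y$ are classical.

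First I would check the properties of $R$ and $A$. Using the bounds $0\le\sigma\le\sigma_{\max}$ and $\mu_{\min}\le\mu\le\mu_{\max}$, Cauchy--Schwarz immediately yields boundedness of both operators; symmetry in each integrand gives self-adjointness; and the non-negativity of $\sigma$ and $1/\mu$ yields monotonicity of $R$ and of $A$ (so in particular $A$ is a self-adjoint family, as required for uniqueness). Boundedness and linearity of $f$ in \eqref{deff-d} follow from $j_{\mathrm d}\in\mathrm{L}^2(0,T;\mathrm{L}^2(\Omega))$.

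The core step is the coercivity-type condition \eqref{existencia}: find $\lambda,\alpha>0$ such that
\[
\lambda\int_{\Omega_{\mathrm c}}\sigma v^2\,dx \;+\;\int_{\Omega}\frac{1}{\mu}|\nabla v|^2\,dx \;\ge\; \alpha\,\|v\|_{\mathrm{H}^1(\Omega)}^2
\qquad\forall v\in \mathrm{H}^1(\Omega).
\]
The obstacle is that the first term only controls $v$ on the conducting subdomain $\Omega_{\mathrm c}\subsetneq\Omega$, whereas the right-hand side involves the full $\mathrm{L}^2(\Omega)$ norm. The standard remedy is a generalized Poincar\'e--Friedrichs inequality: since $\Omega$ is connected (bounded with Lipschitz boundary) and $\Omega_{\mathrm c}\subset\Omega$ has positive measure, there is a constant $C_P>0$ with $\|v\|_{\mathrm{L}^2(\Omega)}^2\le C_P\bigl(\|\nabla v\|_{\mathrm{L}^2(\Omega)}^2+\|v\|_{\mathrm{L}^2(\Omega_{\mathrm c})}^2\bigr)$. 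Using $\sigma\ge\sigma_{\min}$ on $\Omega_{\mathrm c}$ and $1/\mu\ge 1/\mu_{\max}$ on $\Omega$, then choosing $\lambda$ proportional to $C_P/\sigma_{\min}$ and $\alpha$ small enough, the desired bound follows.

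With all hypotheses of Theorem~\ref{welld} verified, existence of $u\in\mathrm{L}^2(0,T;\mathrm{H}^1(\Omega))$ is immediate, uniqueness follows from the self-adjointness and regularity of $A(t)$, and the a priori estimate \eqref{dependencia-d} reads
\[
\|u\|_{\mathrm{L}^2(0,T;\mathrm{H}^1(\Omega))}\le C\left(\|f\|_{\mathrm{L}^2(0,T;X')}^2+\langle Ru_0,u_0\rangle_Y\right)^{1/2}.
\]
It remains to translate this bound into the statement of the theorem: $\langle Ru_0,u_0\rangle_Y=\int_{\Omega_{\mathrm c}}\sigma u_0^2\le\sigma_{\max}\|u_0\|_{\mathrm{L}^2(\Omega_{\mathrm c})}^2$, and $\|f(t)\|_{X'}\le\|j_{\mathrm d}(t)\|_{\mathrm{L}^2(\Omega)}$ by Cauchy--Schwarz, which combine to give the asserted estimate in terms of $\|u_0\|_{\mathrm{L}^2(\Omega_{\mathrm c})}$ and $\|j_{\mathrm d}\|_{\mathrm{L}^2(0,T;\mathrm{L}^2(\Omega))}$.
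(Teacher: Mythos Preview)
Your overall strategy---casting Problem~\ref{two-dw1} as an instance of the abstract Problem~\ref{ppdc} and verifying the hypotheses of Theorem~\ref{welld}---is exactly what the paper does, and your treatment of boundedness/self-adjointness of $R$ and $A$, uniqueness via time-independence of $A$, and the translation of the a~priori bound \eqref{dependencia-d} into the stated estimate all match the paper essentially line for line.

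There is, however, a slip in your identification of the space: in the paper's setup (and in the statement of Problem~\ref{two-dw1} itself) one has $X=\mathrm{H}_0^1(\Omega)$, not $\mathrm{H}^1(\Omega)$. This changes the coercivity step. Because test functions vanish on $\partial\Omega$, the ordinary Poincar\'e inequality in $\mathrm{H}_0^1(\Omega)$ already controls the full $\mathrm{H}^1$-norm by the gradient, so the paper simply discards the nonnegative term $\langle Rv,v\rangle_Y$ and writes
\[
\langle Rv,v\rangle_Y+\langle Av,v\rangle_X\;\ge\;\int_\Omega \frac{1}{\mu}|\nabla v|^2\;\ge\;\frac{1}{\mu_{\max}}\|\nabla v\|_{\mathrm{L}^2(\Omega)}^2\;\ge\;\frac{C_{\mathrm P}}{\mu_{\max}}\|v\|_{\mathrm{H}^1(\Omega)}^2,
\]
which is \eqref{existencia} with $\lambda=1$ and $\alpha=C_{\mathrm P}/\mu_{\max}$. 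Your detour through a generalized Poincar\'e--Friedrichs inequality (bounding $\|v\|_{\mathrm{L}^2(\Omega)}$ by the gradient together with $\|v\|_{\mathrm{L}^2(\Omega_{\mathrm c})}$) would indeed be needed if $X$ were the full $\mathrm{H}^1(\Omega)$, and it remains valid on the subspace $\mathrm{H}_0^1(\Omega)$, so your argument is not wrong; it is just more elaborate than necessary and relies on extra geometric hypotheses (connectedness of $\Omega$, $|\Omega_{\mathrm c}|>0$) that the paper does not need to invoke. Once the space is corrected, your proof collapses to the paper's.
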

\begin{proof}
The operator $R$ is clearly monotone and self-adjoint. 
Furthermore, the following G\r{a}rding-type inequality holds true for all $v\in X$:
\begin{equation}\label{garding-d}
\left\langle Rv,v\right\rangle_{\Y} + \left\langle Av,v\right\rangle_{\X} =\int_{\oc}\sigma\abs{v}^2 + \int_{\Omega}\dfrac{1}{\mu}\abs{\grad v}^2
\geq  \dfrac{1}{\mu_{\max}}\int_{\Omega}\abs{\grad v}^2 \geq  \dfrac{C_{\mathrm{P}}}{\mu_{\max}}\norm{v}_{\huno}^2,
\end{equation}
where $C_{\mathrm{P}}$ is the positive constant given by the Poincar\'e inequality in $\houno$. Consequently, Theorem~\ref{welld} shows that 
Problem~\ref{two-dw1} has at least a solution. 
Moreover, since the family of operators $A$ is independent of time, it is trivially a regular family and consequently the solution $u$ of 
Problem~\ref{two-dw1} is unique.
Finally, by using \eqref{dependencia-d} and noticing that
\[
\left\langle Ru_0,u_0\right\rangle_{\Y}=\int_{\oc}\sigma\abs{u_0}^2\leq \sigma_{\max}\norm{u_0}_{\ldosoc}^2,
\]
we conclude the proof.
\end{proof}
\begin{remark} It is easy to see that 
\begin{equation*}
\sigma\partial_t u - \dive\left(\frac1\mu\nabla u\right)= \jd \qquad\textrm{in }\ldoshounop,
\end{equation*}
consequently $u\vert_{\oc}$ belongs to the space $W^{1,2}(0,T;\hunooc,\hunooc')$. 
\end{remark}

\subsection{Error estimates for the fully-discrete degenerate formulation}
The fully-discrete approximation for the degenerate Problem~\ref{two-dw1} is obtained by using a finite element subspaces to define $X_h$ which is the corresponding family of finite dimensional subspaces of $X$ (see Section~\ref{discreto-d}). To this aim, in what follows we assume that $\Omega$ and $\oc$ are Lipschitz polygonal. Let $\set{\mathcal{T}_h}_h$ be a regular family of triangles meshes of $\Omega$ such that each element $K\in\mathcal{T}_h$ is contained either in $\overline{\Omega}_{\mathrm{c}}$ or in $\overline{\Omega}_{\mathrm{d}}:=\overline{\Omega\setminus\occ}$. As usual, $h$ stands for the largest diameter of the triangles $K$ in $\mathcal{T}_h$. 

We define $X_h$ using the standard Lagrange finite element subspace of $\houno$, \textit{i.e.},
\[
X_h:=\left\{ v_h\in C^0(\overline\Omega): v\vert_{K}\in \mathbb{P}_1(K)\right\}\cap \houno,
\]
where $C^0(\overline\Omega)$ is the space of scalar continuous functions defined on $\overline\Omega$ and $\mathbb{P}_1$ is the set of polynomials of degree not greater than $1$. Then, the fully-discrete approximation for the degenerate parabolic formulation is given by Problem~\ref{ppdd}, by using the notation \eqref{defA-d}--\eqref{deff-d}. More precisely, Given $u_{0,h}\in X_h$ an approximation of $u_0$, the fully-discrete approximation of Problem~\ref{two-dw1} can be read as follows.
\begin{problem}
\label{fd-eddy-d}
Find $u_h^n\in\X_h$, $n=1,\dots, N$, such that
\begin{align*}
\int_{\oc} \sigma \left(\dfrac{\uhn-\uhnm}{\Delta t}\right) v + \int_{\Omega} \dfrac{1}{\mu}\grad\uhn\cdot \grad v &= \int_{\Omega} \jd(t_n) v
\qquad \forall v\in \X_h,\\
u_h^0&=u_{0,h}.
\end{align*}
\end{problem}

Thus, by using \eqref{garding-d}, the existence and uniqueness of solution $u_h^n\in\X_h$, $n=1,\dots, N$, of the fully-discrete problem is guaranteed by Theorem~\ref{welldh}
for a small enough time-step. Moreover, by noticing that in this case we have
\[
\produ{R(\partial_{t}u(t_k)-\dif u_h^k),\partial_{t}u(t_k)-\dif u_h^k}_{\Y} = \int_{\oc}\sigma\norm{\partial_{t}u(t_k)-\dif u_h^k}_{\ldosoc}^2,
\]
we obtain the following result about the error estimates for the fully-discrete approximation Problem~\ref{fd-eddy-d} of the degenerate parabolic Problem~\ref{two-dw1}, which is a direct consequence of Theorem~\ref{teo-deg-u}.

\begin{theorem}\label{cea-eddy-d}
Let $u\in\ldoshouno$ be the solution of the eddy current Problem~\ref{two-dw1} and $u_h^n\in\X_h$, $n=1,\dots, N$, the fully-discrete solution of Problem~\ref{fd-eddy-d}.
If $u_0\in\houno$ and $u\in \H^1(0,T;\houno)\cap\H^2(0,T;\ldoso)$ then there exists a constant $C>0$, independent of $h$ and $\Delta t$, such that
\begin{align*}
&\max_{1\leq n\leq N}\|u(t_n) - \uhn\|_{\sigma,\oc}^2 
+ \Delta t\sum_{n=1}^{N}\|u(t_n) - \uhn\|_{\houno}^2
+ \Delta t\sum_{n=1}^{N}\norm{\partial_{t}u(t_n)-\dif u_h^n}_{\sigma,\oc}^2
\\
&\quad\quad
\leq C \left\{ 
\|u_0 -u_{0,h} \|_{\houno}^2
+\max_{0 \leq n \leq N}\left[ \inf_{v\in \V_h}{\|u(t_{n}) - v\|_{\houno}^2}\right] 
+\int_0^T \inf_{v\in\V_h} \| \partial_{t} u(t)-v \|_{\houno}^2\,dt 
\right.\\[1ex]
&
\qquad\qquad\quad
\left.
+ (\Delta t)^2 \int_0^T \| \partial_{tt} u(t) dt\|^2_{\ldoso} \right\},
\end{align*}
where 
$\norm{w}_{\sigma,\oc}^2:=\displaystyle\int_{\oc}\sigma\vert w\vert^2$.
\end{theorem}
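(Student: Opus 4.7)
My plan is to invoke Theorem~\ref{teo-deg-u} directly, once I verify that all the abstract hypotheses are met in the concrete setting identified in \eqref{defA-d}--\eqref{deff-d}, and then translate the abstract pairings into the concrete norms appearing in the statement.

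First I would check the hypotheses. With $X=\houno$, $Y=\ldoso$, and $R$, $A$ defined by \eqref{defR-d} and \eqref{defA-d}, the proof of Theorem~\ref{well-eddy-two} already establishes that $R$ is monotone and self-adjoint and that the G\r{a}rding inequality \eqref{garding-d} provides \eqref{existencia} with $\lambda=1$ and $\alpha=C_{\mathrm P}/\mu_{\max}$. To apply the stronger estimate \eqref{esti-deh} I further need each $A(t)$ monotone, self-adjoint, and to verify \eqref{Ap-bounded}; all three are immediate because $A$ is independent of $t$, so $A'(t)\equiv 0$ (so \eqref{Ap-bounded} is trivial and $A$ is a regular family), $A$ is self-adjoint by inspection of \eqref{defA-d}, and monotone since $\langle Av,v\rangle_X=\int_\Omega(1/\mu)|\nabla v|^2\ge 0$. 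The regularity assumptions $u_0\in\houno=X$ and $u\in\H^1(0,T;\houno)\cap\H^2(0,T;\ldoso)$ match those of Theorem~\ref{teo-deg-u}.

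Next, I would apply both \eqref{esti-eh} and \eqref{esti-deh}, translating the abstract pairings by
\[
\langle Rv,v\rangle_Y=\int_{\oc}\sigma|v|^2=\|v\|_{\sigma,\oc}^2,\qquad \|\cdot\|_X=\|\cdot\|_{\houno}.
\]
The bound \eqref{esti-eh} then delivers the first two terms on the left-hand side of the target estimate, while the first term on the left-hand side of \eqref{esti-deh} delivers $\Delta t\sum_{k=1}^n\|\partial_t u(t_k)-\dif u_h^k\|_{\sigma,\oc}^2$; the remaining term $\max_n\langle A(t_n)(u(t_n)-u_h^n),u(t_n)-u_h^n\rangle_X$ is nonnegative and can simply be discarded from the left-hand side. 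Adding the two inequalities yields the desired bound, with the slight bookkeeping point that the $\|u_0-u_{0,h}\|_{\ldoso}^2$ contribution coming from \eqref{esti-eh} is absorbed into $\|u_0-u_{0,h}\|_{\houno}^2$ via the continuous embedding $X\hookrightarrow Y$.

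There is essentially no obstacle: the entire argument is a bookkeeping check identifying the abstract objects $R$, $A(t)$, $X$, $Y$ with their concrete counterparts in the eddy current formulation. The fact that $A$ does not depend on time makes the hypotheses on $A(t)$ of Theorem~\ref{teo-deg-u} cost-free to check, which is precisely why the statement can be presented as a direct consequence of the abstract theory.
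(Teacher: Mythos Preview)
Your proposal is correct and follows exactly the paper's approach: the paper presents the theorem as ``a direct consequence of Theorem~\ref{teo-deg-u}'' after observing that $\langle R(\partial_t u(t_k)-\dif u_h^k),\partial_t u(t_k)-\dif u_h^k\rangle_Y=\|\partial_t u(t_k)-\dif u_h^k\|_{\sigma,\oc}^2$, and your write-up spells out precisely this verification-plus-translation.
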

Finally, to obtain the asymptotic error estimate, we need to consider the Sobolev space $\mathrm{H}^{1+s}(\Omega)$ for $0<s\leq1$. 
It is well known that the Lagrange interpolant $\mathcal{L}_h v\in X_h$ is well defined for all $v\in\mathrm{H}^{1+s}(\Omega)\cap\houno$
and satisfies the following estimate (see, for instance, \cite{ciarlet})
\begin{equation}\label{bimbo-d}
\norm{v-\mathcal{L}_h v}_{\houno} \leq C
h^{s} \norm{v}_{\mathrm{H}^{1+s}(\Omega)}
\qquad \forall v\in\mathrm{H}^{1+s}(\Omega)\cap\houno. 
\end{equation}
Consequently, we have the following result which shows the asymptotic convergence of the fully-discrete approximation.
\begin{corollary}\label{coroconvucd-de} 
If $u_0\in\houno$ and $u\in \H^1(0,T;\houno\cap\mathrm{H}^{1+s}(\Omega))\cap \H^2(0,T;\ldoso)$ for $0<s\leq1$, there exists a constant $C>0$ independent of $h$ and $\Delta t$, such that
\begin{align*}
&\max_{1\leq n\leq N}\| u(t_n)-\uhn\|_{\sigma,\oc}^2 
+ \Delta t\sum_{n=1}^{N}\|u(t_n)-\uhn\|_{\houno}^2
+ \Delta t\sum_{n=1}^{N}\norm{\partial_{t}u(t_n)-\dif u_h^n}_{\sigma,\oc}^2
\\
&\qquad\qquad
\leq C \left\{
\|u_0 -u_{0,h} \|_{\houno}^2
+h^{2s}\left[
\max_{1\leq n\leq N}
\|u(t_n)\|_{\mathrm{H}^{1+s}(\Omega)}^2
+\|\partial_t u\|_{\L^2(0,T;\mathrm{H}^{1+s}(\Omega))}^2
\right]\right.\\
&\quad
\left.
\qquad\quad
\phantom{\max_{1\leq n\leq N}}
+(\Delta t)^2\|\partial_{tt}u\|_{\L^2(0,T;\ldoso^3)}^2
\right\}.
\end{align*}
Moreover, if $u_0\in\houno\cap\mathrm{H}^{1+s}(\Omega)$, for $0<s\leq1$ and $u_{0,h}=\mathcal{L}_h u_0$ then
\[
\max_{1\leq n\leq N}\| u(t_n)-\uhn\|_{\sigma,\oc}^2 
+ \Delta t\sum_{n=1}^{N}\|u(t_n)-\uhn\|_{\houno}^2
+ \Delta t\sum_{n=1}^{N}\norm{\partial_{t}u(t_n)-\dif u_h^n}_{\sigma,\oc}^2
=\mathcal{O}(h^{2s}+(\Delta t)^2).
\]
\end{corollary}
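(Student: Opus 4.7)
The plan is to obtain Corollary~\ref{coroconvucd-de} as a direct bookkeeping exercise on the right-hand side of Theorem~\ref{cea-eddy-d}, replacing each infimum over $X_h$ by an evaluation at the Lagrange interpolant and invoking the interpolation estimate \eqref{bimbo-d}. Under the assumed regularity $u\in\H^1(0,T;\houno\cap\mathrm{H}^{1+s}(\Omega))$, the continuous embedding $\H^1(0,T;\mathrm{H}^{1+s}(\Omega))\hookrightarrow\mathcal{C}^0([0,T];\mathrm{H}^{1+s}(\Omega))$ guarantees that $u(t_n)\in\houno\cap\mathrm{H}^{1+s}(\Omega)$ for every $n$ and that $\partial_t u(t)\in\houno\cap\mathrm{H}^{1+s}(\Omega)$ for almost every $t\in[0,T]$, so the interpolants $\mathcal{L}_h u(t_n)$ and $\mathcal{L}_h(\partial_t u(t))$ are well-defined competitors in the respective infima.

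For the nodal term, I would take $v=\mathcal{L}_h u(t_n)\in X_h$ and apply \eqref{bimbo-d} to obtain the bound $\inf_{v\in X_h}\|u(t_n)-v\|_{\houno}^2\leq Ch^{2s}\|u(t_n)\|_{\mathrm{H}^{1+s}(\Omega)}^2$. To reconcile the index range $0\leq n\leq N$ appearing in Theorem~\ref{cea-eddy-d} with the range $1\leq n\leq N$ in the statement, I would split the maximum and estimate the $n=0$ contribution by $\inf_{v\in X_h}\|u_0-v\|_{\houno}^2\leq\|u_0-u_{0,h}\|_{\houno}^2$, which is harmlessly absorbed into the initial-error term. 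For the time-integrated term, taking $v=\mathcal{L}_h(\partial_t u(t))$ pointwise and using \eqref{bimbo-d} yields $\inf_{v\in X_h}\|\partial_t u(t)-v\|_{\houno}^2\leq Ch^{2s}\|\partial_t u(t)\|_{\mathrm{H}^{1+s}(\Omega)}^2$, whose integration over $(0,T)$ produces the announced $h^{2s}\|\partial_t u\|_{\L^2(0,T;\mathrm{H}^{1+s}(\Omega))}^2$ contribution. The $\partial_{tt}u$-term is carried over unchanged from Theorem~\ref{cea-eddy-d}. Collecting these bounds gives the first inequality of the corollary.

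For the asymptotic statement, the stronger assumption $u_0\in\houno\cap\mathrm{H}^{1+s}(\Omega)$ together with the choice $u_{0,h}=\mathcal{L}_h u_0$ allows one more application of \eqref{bimbo-d}, giving $\|u_0-u_{0,h}\|_{\houno}^2\leq Ch^{2s}\|u_0\|_{\mathrm{H}^{1+s}(\Omega)}^2$. Substituting this in the first part and using the assumed Bochner norms as constants independent of $h$ and $\Delta t$, every term on the right-hand side becomes either $\mathcal{O}(h^{2s})$ or $\mathcal{O}((\Delta t)^2)$, which yields the stated rate.

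I do not anticipate any genuine obstacle; the proof is essentially a routine propagation of the interpolation estimate \eqref{bimbo-d} into the abstract quasi-optimal bound of Theorem~\ref{cea-eddy-d}. The only points requiring mild care are the pointwise-in-time application of $\mathcal{L}_h$ to $\partial_t u(t)$ (justified by linearity of $\mathcal{L}_h$ and the measurability/regularity assumptions on $u$) and the treatment of the $n=0$ slot in the discrete maximum.
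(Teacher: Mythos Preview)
Your proposal is correct and follows exactly the route the paper takes: the paper's own proof is the one-line remark that the corollary ``is a direct consequence of Theorem~\ref{cea-eddy-d} and the interpolation error estimate \eqref{bimbo-d}.'' You have simply spelled out the bookkeeping (bounding each infimum by the Lagrange interpolant, handling the $n=0$ slot, and applying \eqref{bimbo-d} once more to $u_0$ for the asymptotic statement), which is precisely what that one-liner encodes.
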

\begin{proof}
It is a direct consequence of Theorem~\ref{cea-eddy-d} and the interpolation error estimate \eqref{bimbo-d}.
\end{proof}

\begin{remark}\label{ErrorFisicas}
The previous result shows that the fully-discrete approximation Problem~\ref{fd-eddy-d} provides  a suitable approximation for the physical variables of the eddy current problem at each time $t_n$, namely the electric field $\ee(t_n)$ in the three-dimensional conducting  
domain $\ocA$ and the magnetic field $\Hn(t_n)$ in the three-dimensional computational domain $\oA$. 
More precisely, we can use the relationship \eqref{def-EH}, to define 
\[
\ee(t_n):= (0,0,-\partial_tu(t_n)) \quad\textrm{in }\ocA,\qquad 
\Hn(t_n):=\frac1\mu\left(\dfrac{\partial u}{\partial x_2}(t_n),-\dfrac{\partial u}{\partial x_1}(t_n),0\right)
 \quad\textrm{in }\oA,
\]
for any $n=1,\ldots,N$, and 
propose the following approximations
\begin{equation}\nonumber
\ee(t_n)\approx\ee_h^n:=(0,0,-\dif\uhn) \qquad\textrm{in }\ocA,
\end{equation}
and
\begin{equation}\nonumber
\Hn(t_n)\approx\Hn_h^n:=\frac1\mu\left(\dfrac{\partial\uhn}{\partial x_2},-\dfrac{\partial\uhn}{\partial x_1},0\right) 
\qquad\textrm{in }\oA.
\end{equation}
Consequently, by using Corollary~\ref{coroconvucd-de}, we deduce the following quasi-optimal error estimates
\begin{equation}\nonumber
\Delta t\sum_{n=1}^{N}\|\ee(t_n)-\ee_h^n\|_{\sigma,\ocA}^2
+ \Delta t\sum_{n=1}^{N}\norm{\Hn(t_n)- \Hn_h^n}_{\mu,\oA}^2
\leq 
\|u_0 -u_{0,h} \|_{\houno}^2
+
C\left[
h^{2s}
+(\Delta t)^2
\right],
\end{equation}
where 
$\norm{\bw}_{\mu,\oA}^2:=\displaystyle\int_{\oA}\frac1\mu\vert \bw\vert^2$.
\end{remark}
\subsection{Numerical results}
In this subsection we present some numerical results obtained with a MATLAB code which implements the numerical method described in Problem~\ref{fd-eddy-d},  to illustrate the convergence with respect to the discretization parameters. To this end, we describe the results obtained for a test problem with a known analytical solution.

\begin{figure}[!htb]
	\begin{center}
		\includegraphics*[scale=0.3]{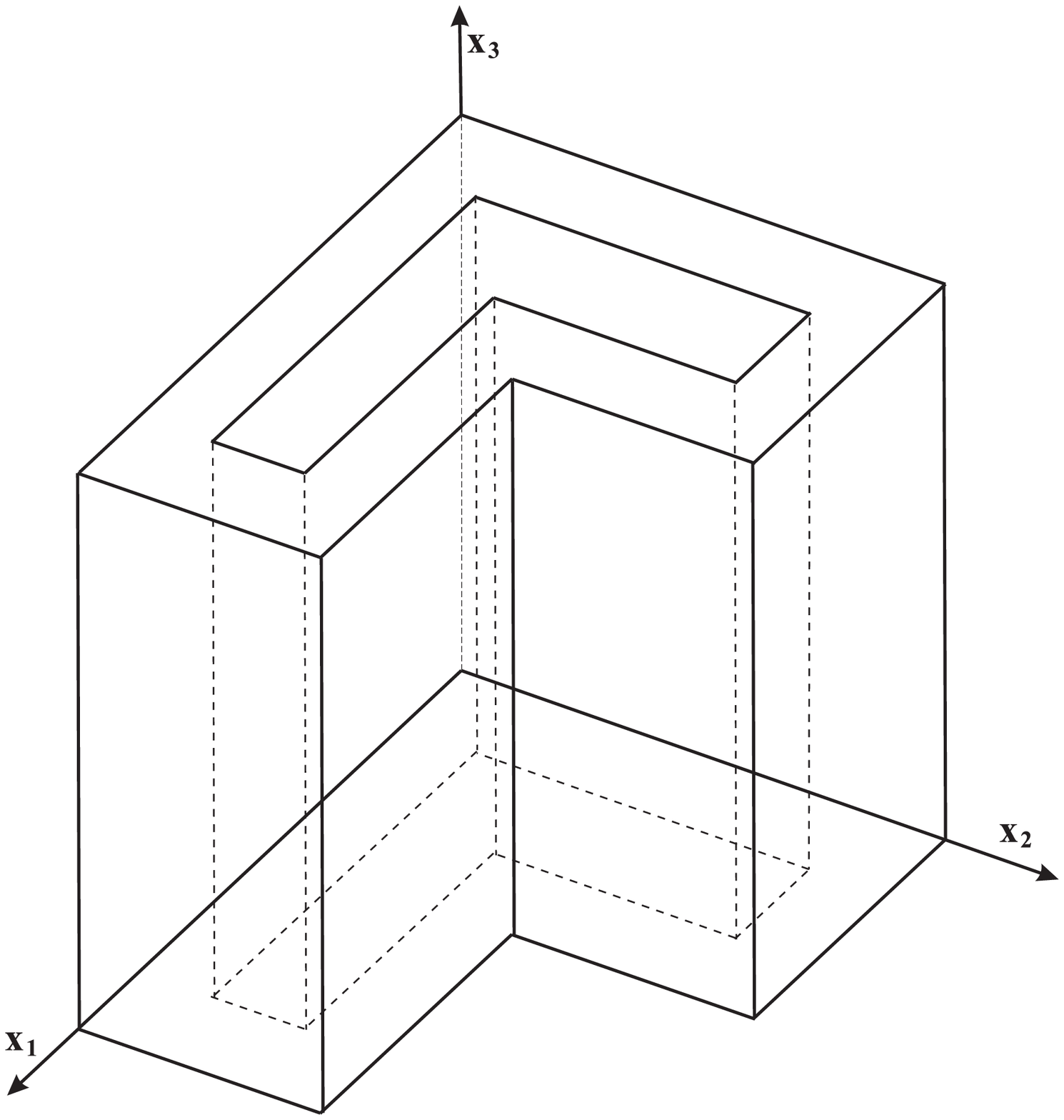}\hspace*{2cm}\includegraphics*[scale=0.45]{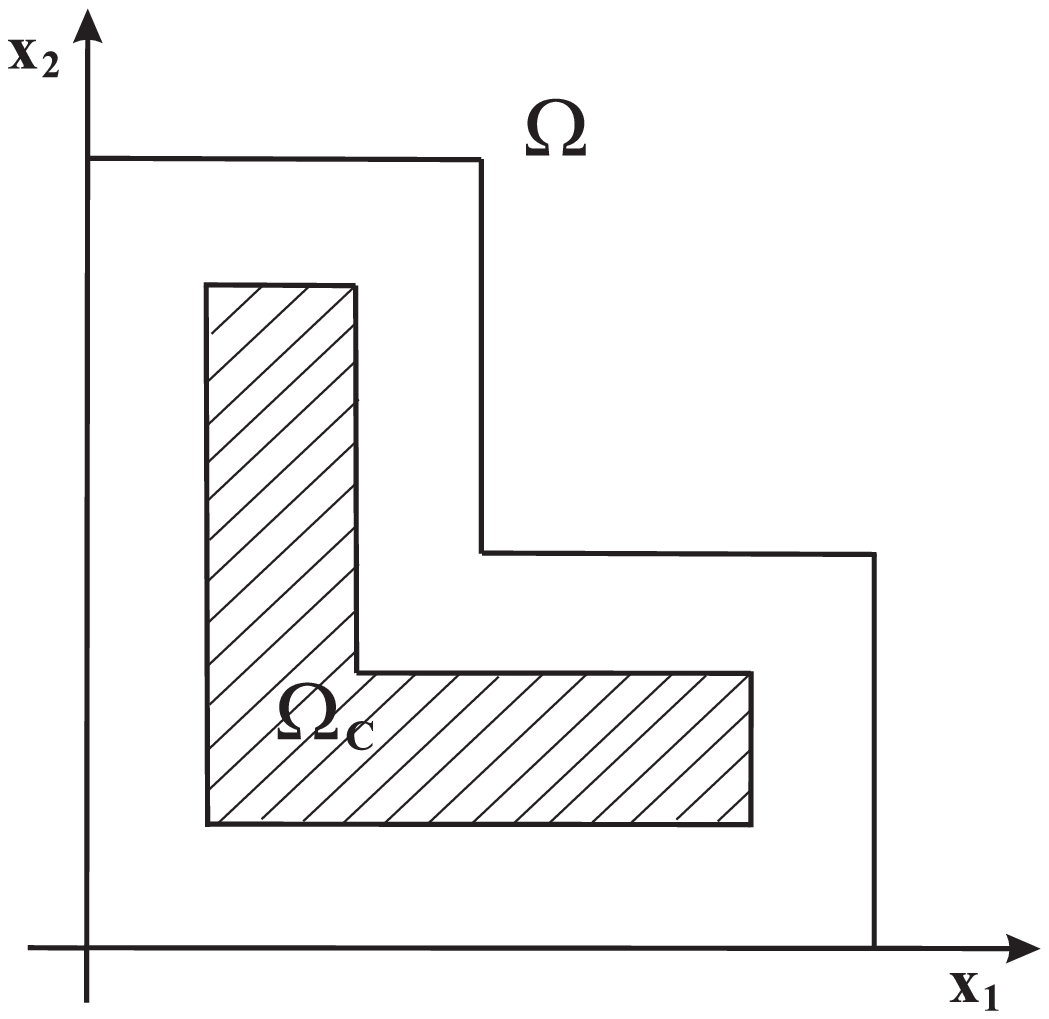}
		\caption{Sketch of the domain 3D (left) and 2D (right).}
		\label{domain}
	\end{center}
\end{figure}

We consider $\oA$ with $\ocA$ and their respective projection onto the plane $x_1x_2$, $\Omega$ and $\oc$ (see Figure~\ref{domain}) and $T=1$. The right hand side $J_d$, is chosen so that
\[
u(x_1,x_2,t)=e^{-5\pi t} \sin (\pi x_1) \sin(\pi x_2),
\]
is the solution to Problem~\ref{two-d} in $\Omega$ with boundary condition $u=0$ on $\partial\Omega$. Notice that $u$ is also solution of Problem~\ref{two-dw1} with $u_0(x_1,x_2)=\sin (\pi x_1) \sin(\pi x_2)$ where, in particular $u_0\in\H^1_0(\Omega)\cap\H^2(\Omega)$. We have taken  $\unit[\mu=\mu_0=4\pi\times10^{-7}]{{Hm}^{-1}}$, $\unit[\sigma=\sigma=10^{6}]{(\Omega m)^{-1}}$ in $\oc$, the magnetic permeability and electric conductivity of vacuum, respectively. The numerical method has been applied with several successively refined meshes and time-steps. The computed approximate solutions have been compared with the analytical one, by calculating the relative percentage error in time-discrete norms from Corollary \ref{coroconvucd-de}. More accurately, thanks to Proposition~\ref{rela-two-three} and Remark~\ref{ErrorFisicas}, we have compute the relative percentage error for the physical variables of interest, the magnetic field and the electric field in the conductor domain, namely
\begin{equation*}
100\,\frac{\Delta t\sum_{n=1}^{N}\norm{\Hn(t_n)- \Hn_h^n}_{\mu,\oA}^2}{\Delta t\sum_{n=1}^{N}\norm{\Hn(t_n)}_{\mu,\oA}^2}
\quad\text{and}\quad
100\,\dfrac{\Delta t\sum_{n=1}^{N}\|\ee(t_n)-\ee_h^n\|_{\sigma,\ocA}^2}{\Delta t\sum_{n=1}^{N}\|\ee(t_n)\|_{\sigma,\ocA}^2},
\end{equation*}
which are time-discrete forms of the errors in $\L^{2}(0,T;\L^2(\oA))$ and $\L^{2}(0,T;\L^2(\ocA))$ norms, respectively.

The Table~\ref{TablaH} shows the relative errors for $\Hn$ in the $\L^{2}(0,T;\L^2(\oA))$-norm, namely the relative errors for $u$ in the $\L^2(0,T;\H^1_0(\Omega))$-norm. We notice that by taking a small enough time-step $\Delta t$, we can observe the behavior of the error with  respect to the space discretization (see the row corresponding to $\Delta t/64$). On the other hand, by considering a small enough mesh-size $h$, we can check the order convergence with respect $\Delta t$ (see the first entries of the column corresponding to $h/64$). Hence, we conclude an order the convergence $\mathcal{O}(h+\Delta t)$ for $\Hn$, which confirm the theoretical results given in Remark~\ref{ErrorFisicas}, proved in Corollary \ref{coroconvucd-de}.

\begin{table}[!htb]
\begin{center}
\begin{tabular}{lccccccccc}
\hline
 & $h$ &$h/2$ & $h/4$ & $h/8$ &$h/16$& $h/32$&$h/64$\\
\hline
$\Delta t$ & \fbox{41.3685}& 22.1296& 12.8925 & 9.1603 &  7.9516& 7.6190 & 7.5335\\
$\Delta t/2$  & 41.3088& \fbox{21.4624}& 11.4341 & 6.8342 & 5.0574& 4.5040& 4.3546& \\
$\Delta t/4$  &41.4454& 21.3041& \fbox{10.9212} & 5.8293 & 3.5396& 2.6751 &2.4108 \\
$\Delta t/8$  &41.5820& 21.3044& 10.7883 & \fbox{5.5072} & 2.9460&1.845&1.3784 \\
$\Delta t/16$ & 41.6723& 21.3307& 10.7652 & 5.4225 &\fbox{2.7648} &1.4813&0.9115 \\
$\Delta t/32$ & 41.7237& 21.3514& 10.7663 & 5.4038 & 2.7172& \fbox{1.3851}&0.7428 \\
$\Delta t/64$ & 41.7511& 21.3637& 10.7702& 5.4008& 2.7059&1.3599& \fbox{0.6932} \\
\hline
\end{tabular}
\caption{Percentage errors for $\Hn$ in the $\L^{2}(0,T;\L^2(\oA))$-norm, with $h=0.3687$ and $\Delta t=0.025$.} 
\label{TablaH}
\end{center}
\end{table}

The Table~\ref{TablaDerU} shows the relative errors for $\ee$ in $\L^{2}(0,T;\L^2(\ocA))$, namely the relative errors  $\partial_t u$ in the $\L^2(0,T;\L^2(\oc))$-norm. We proceed as above, now we can see an order the convergence $\mathcal{O}( h^2+\Delta t)$ (see the row corresponding to $\Delta t/512$ and the column corresponding to $h/16$), in spite of the fact that only a linear order of convergence in $h$ has been proved above. Hence, we have obtained the theoretical results proved in Corollary \ref{coroconvucd-de}, too.

\begin{table}[!htb]
\begin{center}
\begin{tabular}{lcccccccc}
\hline
& $h$ &$h/2$ & $h/4$ & $h/8$ &$h/16$\\
\hline
$\Delta t$ & \fbox{26.3489}& 23.9703 & 23.6728& 23.6232 & 23.6127\\
$\Delta t/2$   & 17.2551&13.4472& 13.1275 & 13.1028 & 13.1006 \\
$\Delta t/4$ & 13.7947&\fbox{7.5263} & 6.9433 & 6.9188 & 6.9213\\
$\Delta t/8$ & 13.2102& 4.8159&3.6233& 3.5566 & 3.5592 \\
$\Delta t/16$ &13.3954& 3.9628&\fbox{1.9873}&1.8078&1.8042 \\
$\Delta t/32$ & 13.6309& 3.8427& 1.3093 &  0.9290 & 0.9082\\
$\Delta t/64$ & 13.7873& 3.8923& 1.1142 &\fbox{0.5144} &0.4574\\
$\Delta t/128$ & 13.8756& 3.9494& 1.0886 &0.3501 &0.2352\\
$\Delta t/256$ & 13.9223& 3.9870& 1.0992 &0.3049 &\fbox{0.1323}\\
$\Delta t/512$ & 13.9463& 4.0081& 1.1111 &0.2992 &0.0927\\
\hline
\end{tabular}
\caption{Percentage errors for  $\ee$ in the $\L^2(0,T;\L^2(\oc))$-norm, with $h=0.3687$ and $\Delta t=0.025$.} 
\label{TablaDerU}
\end{center}
\end{table}

Figure~\ref{ErroresRel_HE} shows log-log plots of the error of $\Hn$ (left) and $\ee$ (right) versus number of degrees of freedom (d.o.f). To report this we have been values of $\Delta t$ proportional to $h$ (see the values within boxes in Table \ref{TablaH}) and $\Delta t$ proportional to $h^2$ (see the values within boxes in Table~\ref{TablaDerU}), respectively.  The slopes of the curves clearly show an order of convergence $\mathcal{O}(h+\Delta t)$ and $\mathcal{O}(h^2+\Delta t)$, respectively.

\begin{figure}[ht!]
	\begin{center}
		\includegraphics*[width=7cm]{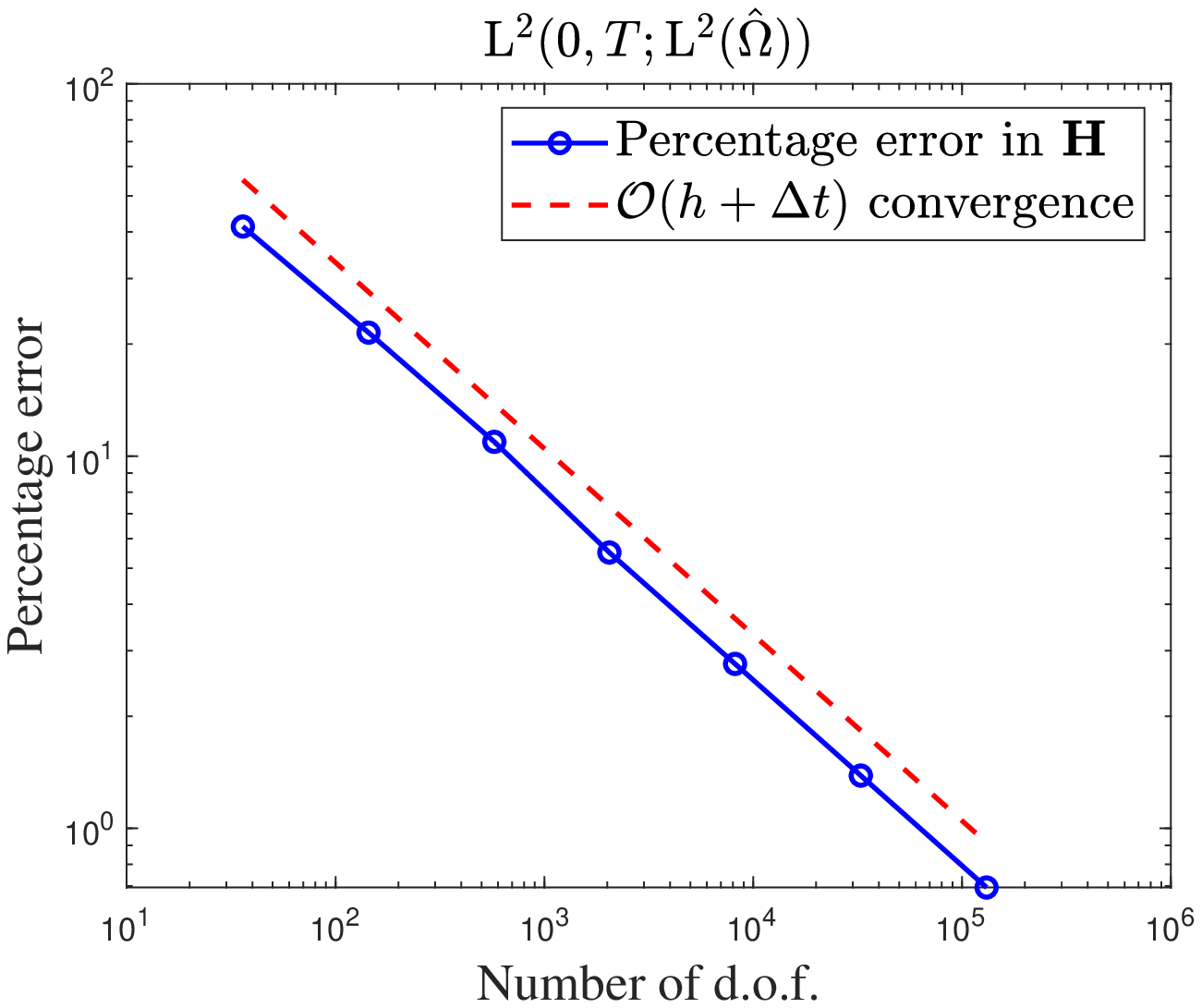}\includegraphics*[width=7cm]{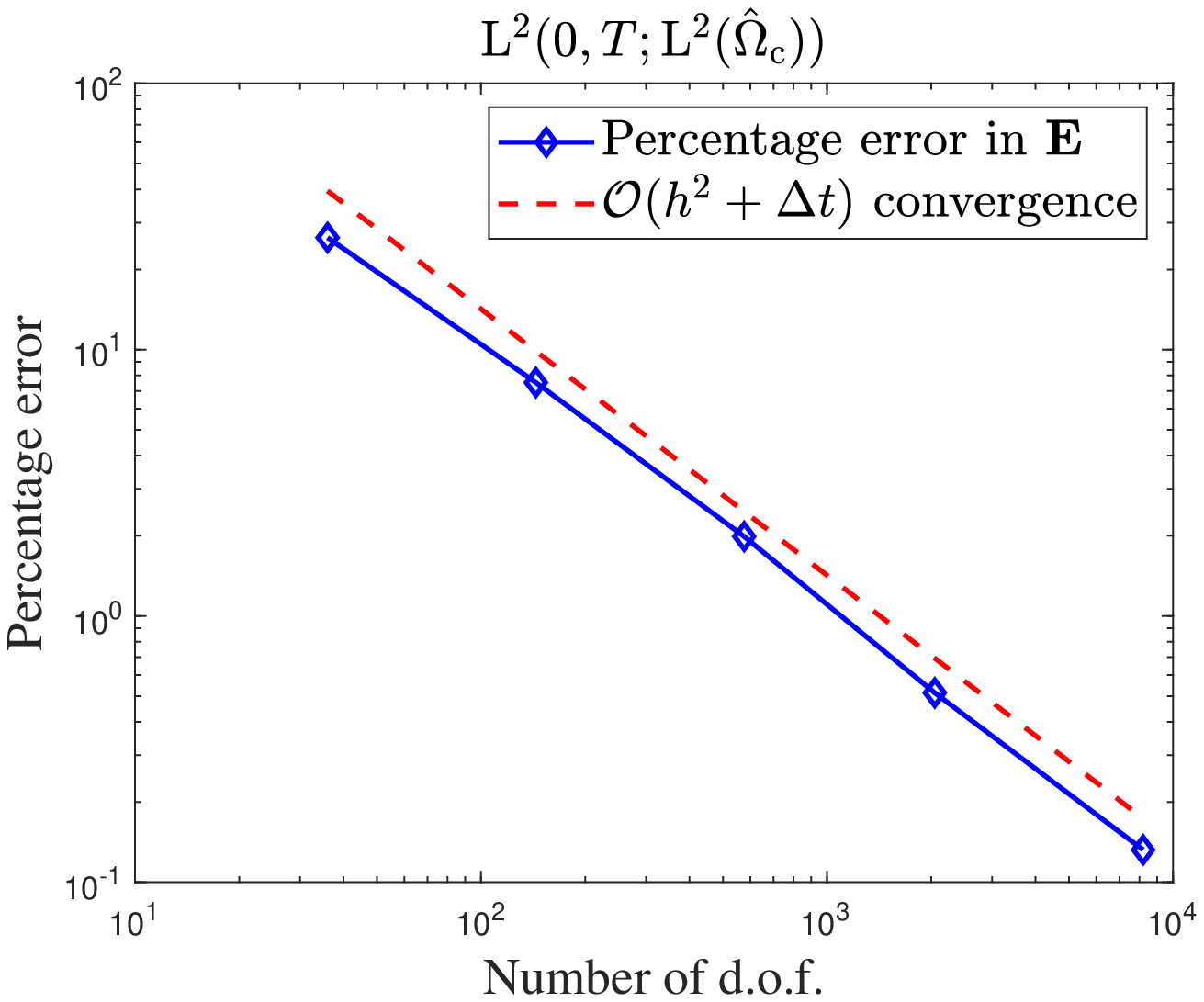}
		\caption{Percentage discretization error curves for $\Hn$ (left) and $\ee$ (right)	versus number of d.o.f. (log-log scale).}
		\label{ErroresRel_HE}
	\end{center}
\end{figure}

\textbf{Acknowledgments}\\
Thanks to Colciencias.

\textbf{Funding}\\
This work was partially supported by Colciencias through the 727 call, University of Cauca through project VRI ID 5243 and by Universidad Nacional de Colombia through Hermes project $46332$.

\textbf{Availability of data and materials}\\
Not applicable.

\textbf{Competing interests}\\
The authors declare that they have no competing interests.

\textbf{Authors contributions}\\
The authors declare that the work was realized in collaboration with the same responsibility. All authors read and
approved the final manuscript.

\end{document}